\def\eqref#1{equation~(\ref{#1})}
\def\1{\bf{1}}
\def\vb{{\bf{b}}}
\def\vs{{\bf{s}}}
\def\vw{{\bf{w}}}
\def\vx{{\bf{x}}}
\def\vy{{\bf{y}}}
\def\vz{{\bf{z}}}
\def\fA{{\mathcal{A}}}
\def\fB{{\mathcal{B}}}
\def\fF{{\mathcal{F}}}
\def\fM{{\mathcal{M}}}
\def\fO{{\mathcal{O}}}
\def\fX{{\mathcal{X}}}
\def\fZ{{\mathcal{Z}}}
\def\BN{{\mathbb{N}}}
\def\BR{{\mathbb{R}}}
\newcommand{\R}{\mathbb{R}}
\DeclareMathOperator*{\argmin}{arg\,min}
\newtheorem{thm}{Theorem}[section]
\newtheorem{dfn}{Definition}[section]
\newtheorem{lem}{Lemma}[section]
\newtheorem{asm}{Assumption}[section]
\newtheorem{remark}{Remark}[section]
\newtheorem{cor}{Corollary}[section]
\newtheorem{prop}{Proposition}[section]
\def\Ddots{\mathinner{\mkern1mu\raise\p@
\vbox{\kern7\p@\hbox{.}}\mkern2mu
\raise4\p@\hbox{.}\mkern2mu\raise7\p@\hbox{.}\mkern1mu}}
\newcommand*{\rom}[1]{\expandafter\@slowromancap\romannumeral #1@}
\newcommand{\probref}[1]{Problem~(\ref{#1})}
\title{Functionally Constrained Algorithm Solves \\Convex Simple Bilevel Problems }
\author{Huaqing Zhang\textsuperscript{* 1,2} \quad Lesi Chen\textsuperscript{* 1,2}\quad Jing Xu\textsuperscript{1}\quad Jingzhao Zhang\textsuperscript{1,2,3} \\
\vspace{-2mm} \\
  \normalsize{\textsuperscript{1}IIIS, Tsinghua University\quad \textsuperscript{2}Shanghai Qizhi Institute }\\ \normalsize{\textsuperscript{3}Shanghai AI Lab}\\
   \vspace{-2mm} \\
\normalsize{ \texttt{\{zhanghq22, chenlc23, xujing21\}@mails.tsinghua.edu.cn}}\\
\normalsize{ \texttt{jingzhaoz@mail.tsinghua.edu.cn
}}\\
}
\begin{document}
\maketitle
\begingroup
\begin{NoHyper}
\renewcommand\thefootnote{*}
\footnotetext{Equal contributions.}
\end{NoHyper}
\endgroup

\begin{abstract}

This paper studies simple bilevel problems, where a convex upper-level function is minimized over the optimal solutions of a convex lower-level problem. We first show the fundamental difficulty of simple bilevel problems, that the approximate optimal value of such problems is not obtainable by first-order zero-respecting algorithms. Then we follow recent works to pursue the weak approximate solutions. For this goal, we
propose a novel method by reformulating them into functionally constrained problems. Our method achieves near-optimal rates for both 
smooth and nonsmooth problems.
To the best of our knowledge, this is the first near-optimal algorithm that works under standard assumptions of smoothness or Lipschitz continuity for the objective functions.

\end{abstract}

\section{Introduction}

This work focuses on the following optimization problem:
\begin{equation}
\begin{aligned}
    \min_{\mathbf x\in \fZ} \; f(\mathbf x) \quad \text{s.t.} \quad  \mathbf x\in \fX_g^* \triangleq\mathop{\arg\min}_{\mathbf z\in \fZ} g(\mathbf z),
    \label{Simple BiO}
\end{aligned}
\end{equation}
 where $f,g$ are convex and continuous functions and $\fZ \subseteq  \mathbb R^n$ is a compact convex set.
Such a problem is often referred to as ``simple bilevel optimization'' in the literature, as the upper-level objective function $f$ is minimized over the solution set of a lower-level problem. It captures a hierarchical structure and thus has many applications in machine learning, including 
lexicographic optimization \citep{kissel2020neural,jiang2023conditional} and lifelong learning \citep{malitsky2017chambolle,jiang2023conditional}. Understanding the structure of simple bilevel optimization and designing efficient algorithms for it is vital and has gained massive attention in recent years \citep{amini2019iterative,cao2023projection,jiang2023conditional,solodov2007bundle,solodov2007explicit,helou2017,sabach2017first,kaushik2021method,shen2023online,samadi2023achieving,cao2024accelerated,chen2024penalty,merchav2023convex}.

To solve the problem, one may observe that Problem (\ref{Simple BiO}) is equivalent to the convex optimization problem $\min_{\vx \in \fX_g^*} f(\vx)$ with \textit{implicitly} defined convex domain $\fX_g^*$.
Hence, it is natural to try to design first-order methods to find $\mathbf{\hat x}\in \BR^n$ such that
\begin{equation}
\begin{aligned}
    |f(\mathbf {\hat x})-f^*|\leq \epsilon_f, \quad g(\mathbf{\hat x})-g^*\leq \epsilon_g,
    \label{absolute optimal sol}
\end{aligned}
\end{equation}
where $f^*$ is the optimal value of \probref{Simple BiO} and $g^*$ is the optimal value of the lower-level problem ($\min_{z\in\fZ}g(z)$). We highlight the \textbf{asymmetry} in $f$ and $g$ here. $f^*$ is the minimal in the constrained set $\fX_g^*$, and hence it is possible that $f(\vx) < f^*$ for some $\vx\in \fZ$. On the other hand,  $g^*$ is globally minimal and hence $g^* \le g(\vx)$ for any $\vx\in\fZ$. \emph{Such asymmetry is natural as the role of $f, g$ are inherently asymmetrical for bilevel problems.} We call such $\mathbf{\hat x}$ a $(\epsilon_f,\epsilon_g)$\textit{-absolute optimal solution}. 

When $\fX_g^*$ is explicitly given, finding such a solution is easy as we can apply methods for constrained optimization problems such as the projected gradient method and Frank-Wolfe method~\citep[Section 3]{bubeck2015convex}. Yet, somewhat surprisingly, 
our first contribution in this paper (Theorem \ref{thm: smooth abs hard} and \ref{thm: Lipschitz abs hard}) shows that it is generally intractable for any zero-respecting first-order method to find absolute optimal solutions for \probref{Simple BiO}. Our negative result shows the fundamental difficulty of simple bilevel problems compared to classical constrained optimization problems.

As a compromise, most approaches developed for simple bilevel optimization in the literature aim to find a solution $\mathbf{\hat x}\in \BR^n $ such that
\begin{equation}
\begin{aligned}
    f(\mathbf{\hat x})-f^*\leq \epsilon_f, \quad g(\mathbf{\hat x})-g^*\leq \epsilon_g,
    \label{optimal sol}
\end{aligned}
\end{equation}
which we call a $(\epsilon_f,\epsilon_g)$-\textit{weak optimal solution}. 
Much progress has been achieved towards this goal ~\cite{chen2024penalty,shen2023online,samadi2023achieving,merchav2023convex,sabach2017first,malitsky2017chambolle}. We note all the above algorithms fall in the class of zero-respecting algorithms (Assumption \ref{algorithm class}) and hence cannot obtain absolute optimal solutions, unless additional assumptions are made. (See Remark\ref{remark: additional assumption} and Appendix \ref{app: HEB} for further discussions.)

Our second contribution pushes this boundary by proposing near-optimal lower and upper bounds. We study  two settings: \textbf{(a) }
$f$ is $C_f$-Lipschitz and $g$ is $C_g$-Lipschitz, 
\textbf{(b)} $f$ is $L_f$-smooth and $g$ is $L_g$-smooth. We can extend the worst-case functions for single-level optimization to Problem (\ref{Simple BiO}) to show  lower bounds of 
\begin{enumerate}
    \item $\Omega \left( 
\max\left\{ C_f^2 / \epsilon_f^2, C_g^2 / \epsilon_g^2 \right\}
\right)$ for the setup (a);
    \item $\Omega\left( \max \left\{ \sqrt{L_f / \epsilon_f}, \sqrt{L_g/ \epsilon_g} \right\}  \right) $ for the setup (b).
\end{enumerate}
Given our constructed lower bounds, we further improve known upper bounds by reducing the task of finding $(\epsilon_f,\epsilon_g)$-weak optimal solutions 
to minimizing the functionally constrained problem:
\begin{align} \label{relaxed func-cons}
    \min_{\vx \in \fZ} f(\vx), \quad {\rm s.t.} \quad \tilde g (\vx)\triangleq g(\vx) - \hat g^* \le 0,
\end{align}
where $\hat g ^*$ is an approximate solution to the lower level problem $\min_{\vx\in\fZ}g(\vx)$. Then we further leverage the reformulation by \citet[Section 2.3.4]{nesterov2018lectures} which relates the optimal value of Problem (\ref{relaxed func-cons}) to the minimal root of the following auxiliary function, where a discrete minimax problem defines the function value:
\begin{equation}
\begin{aligned}\label{tmp_psi*}
\psi^*(t)=\min_{\vx\in \fZ} \left\{ \psi(t,\vx) \triangleq \max\left\{f(\vx)-t,\tilde g(\vx) \right\}\right\}.
\end{aligned}
\end{equation}

Based on this reformulation, we introduce a novel method FC-BiO (Functionally Constrained Bilevel Optimizer).
FC-BiO is a double-loop algorithm. It adopts a bisection procedure on $t$ in the outer loop and applies gradient-based methods to solve the sub-problem (\ref{tmp_psi*}). Our algorithms achieve the following upper bounds:

\begin{enumerate}
    \item $\tilde \fO\left( 
\max\left\{ C_f^2 / \epsilon_f^2, C_g^2 / \epsilon_g^2 \right\}
\right)$ for the setup (a);
    \item $\tilde \fO\left( \max \left\{ \sqrt{L_f / \epsilon_f}, \sqrt{L_g/ \epsilon_g} \right\}  \right)$ for the setup (b),
\end{enumerate}

where $\tilde \fO$ hides logarithmic terms. Both complexity upper bounds match the corresponding lower bounds up to logarithmic factors. In words, we summarize our contributions as follows:

\begin{itemize}
\item We prove the intractability for any zero-respecting first-order methods to find a $(\epsilon_f,\epsilon_g)$-absolute optimal solution of simple bilevel problems. 
\item We propose a novel method FC-BiO that has near-optimal rates for finding 
$(\epsilon_f,\epsilon_g)$-weak optimal solutions of 
both nonsmooth and smooth problems. To the best of our knowledge, this is the first near-optimal algorithm that works under standard assumptions of smoothness or Lipschitz continuity for the objective functions.
A comparison of previous results can be found in Section~\ref{Related Work}.
\end{itemize}

\section{Related work}\label{Related Work}
In the literature, various methods~\citep{solodov2007bundle,solodov2007explicit,helou2017,beck2014first,sabach2017first,malitsky2017chambolle,amini2019iterative,kaushik2021method,shen2023online,samadi2023achieving,jiang2023conditional,cao2024accelerated,chen2024penalty,merchav2023convex} have been proposed to achieve a $(\epsilon_f,\epsilon_g)$-weak optimal solution to simple bilevel problems defined as Equation (\ref{optimal sol}) . 
Below, we review the existing methods
with non-asymptotic convergence. For ease of presentation, we state the results for $\epsilon_f = \epsilon_g = \epsilon$.

\paragraph{Prior results on Lipschitz problems} \citet{kaushik2021method} proposed the averaging iteratively regularized gradient method 
(a-IRG) for convex optimization with variational inequality
constraints, of which Problem (\ref{Simple BiO}) is a special case. a-IRG achieves the rate of $\fO(1/\epsilon^4)$. \citet{shen2023online} proposed a method for solving Problem (\ref{Simple BiO}) with $\fO(1/\epsilon^3)$ complexity based on the online learning framework. When $f$ is Lipschitz continuous and $g$ is smooth,
\citet{merchav2023convex} proposed a gradient-based algorithm with $\fO(1/ \epsilon^{1/(1-\alpha)} )$  complexity for any $\alpha \in (0.5,1)$.
However, none of these methods can achieve the optimal rate of $\fO(\epsilon^{-2})$.

\paragraph{Prior results on smooth problems}

\citet{samadi2023achieving} proposed the regularized accelerated proximal method (R-APM) with a complexity of $\fO(1/ \epsilon)$. Under the additional weak sharp minima condition on $g$, 
the complexity of R-APM improves to $\fO\left( 1/ \sqrt{\epsilon} \right)$. 
However, this condition is often too strong and does not hold for many problems.
\citet{chen2024penalty} extended the result of \citep{samadi2023achieving} to the more general $\alpha$-H\"{o}lderian error bound condition. 
However, their method achieves the optimal rate only when $\alpha = 1$, which reduces to the weak sharp minima condition. 
\citet{jiang2023conditional} developed a conditional gradient type algorithm (CG-BiO) with a complexity of $\mathcal \fO\left(1/ \epsilon \right)$, which approximates $\mathcal X_g^*$ similar to the cutting plane approach. Later on, \citet{cao2024accelerated} proposed
 an accelerated algorithm with a similar cutting plane approach to achieve the rate of $\fO(\max\{ 1 / \sqrt{\epsilon_f}, 1/ \epsilon_g \})$, which can further be improved to
 $\fO( 1/ \sqrt{\epsilon})$ under the additional weak sharp minima condition.
Recently, \citet{wang2024near} reduced \probref{Simple BiO} to finding the smallest $c$ such that the optimal value of the following parametric problem is $g^*$: $\min_{\vx \in \BR^n} g(\vx), ~{\rm s.t.}~ f(\vx)  \le c $.
They adopted a bisection method to find such a $c$.
To solve this parametric problem,  Accelerated Proximal Gradient method is applied on $g$ with projection operator onto the sublevel 
set $ \fF_c = \{\vx ~|~ f(\vx) \le c \} $, which we call \textit{sublevel set oracles}. This leads to an upper bound of $\tilde \fO(1/\sqrt{\epsilon})$. 
Such an oracle is obtainable for norm-like functions such as $f(\vx) = \frac{1}{2} \Vert \vx \Vert^2$. However, it may be computationally intractable for more general functions, such as MSE loss or logistic loss. It is a very strong oracle that is seldom used in the literature on optimization: the single-level optimization of a function $f$ using sublevel set oracles can be completed in $\fO(\log (1/\epsilon))$ iterations of bisection procedure.
Compared with previous work~\citep{wang2024near,cao2024accelerated,chen2024penalty,samadi2023achieving}, our proposed methods achieve the $\tilde \fO(1/\sqrt{\epsilon})$ rate \textit{under standard assumptions, without assuming $f$ is a norm-like function or $g$ satisfies the weak sharp minima condition.}

\paragraph{Comparison with Nesterov's methods for functionally constrained problems} 
Based on similar reformulation, \citet{nesterov2018lectures} has proposed algorithms for functionally constrained problems, of which  
Problem (\ref{relaxed func-cons}) is a special case: one for smooth problems in Section 2.3.5, and one for Lipschitz problems in Section 3.3.4.  
However, Nesterov's algorithm for smooth problems relies on the strong convexity of $f$ and $g$, which does not hold in our bilevel setups. In this case, $\fX_g^*$ would be a singleton, rendering the upper-level problem trivial. Our algorithm does not require the strong convexity assumption, and has a unified framework for both smooth and nonsmooth problems.

\section{Preliminaries}\label{preliminaries}

For any $\vx\in \BR^n$, let $\vx_{[j]}$ represent the $j$-th coordinate of $\vx$ for $j = 1,\cdots, n$. We use $\text{supp}(\vx):=\{j\in[d]~:~\vx_{[j]}\not = 0\}$ to denote the support of $\vx$. The Euclidean ball centered at $\vx$ with radius $R$ is denoted as $ \mathcal B(\vx,R)\triangleq \{\vy\;|\;\|\vy-\vx\|_2\leq R\}$.  For any closed convex set $\mathcal C\subseteq \BR^n$, the Euclidean projection of $\vx$ onto $\mathcal{C}$ is denoted by $\Pi_{\mathcal C}(\vx)\triangleq\argmin_{\vy\in \mathcal C}\|\vy-\vx\|_2$.  We say a function $h$ is $C$-Lipschitz in domain $\mathcal Z$ if $\|h(\vx)-h(\vy)\|_2\leq C\|\vx-\vy\|_2$ for all $\vx,\vy\in\mathcal Z$. We say a
differentiable real-valued function $h$ is $L$-smooth if it has $L$-Lipschitz continuous gradients. 

We now state the assumptions required in our theoretical results.


\begin{asm} \label{asm:D}
    Consider \probref{Simple BiO}. We assume that 
    \begin{enumerate}
        \item Functions $f$ and $g$ $:\BR^n \to \BR$ are convex and continuous.
        \item The feasible set $\fZ$ is convex and compact with diameter $D=\sup_{\vx,\vy \in \fZ}\|\vx-\vy\|_2$.
    \end{enumerate}
\end{asm}

{The compactness assumption ensures that the subprocesses adopted in our method have a unified upper complexity bound (see Section \ref{Subsection 5.3}). We note that other works involving bisection procedures, such as \citet{wang2024near}, may also need to address this issue to derive an explicit dependence on the distance term (although it is not stated formally in their paper). For unconstrained problems, if we know that the initial distance $\|\vx^*-\vx_0\|_2$ is upper bounded by $R$, we can simply take $\fZ = \fB(\vx_0,R)$.}

We use Assumption \ref{asm:D} throughout this paper, but distinguish the following two different settings.
\begin{asm}
\label{Smooth Problem}
Consider \probref{Simple BiO}.
We assume that $f$ and $g$ are $L_f$-smooth and $L_g$-smooth respectively. We call such problems $(L_f,L_g)$-smooth problems.
\end{asm}

\begin{asm}
\label{Lipschitz Problem}
Consider \probref{Simple BiO}. We assume that $f$ and $g$ are $C_f$-Lipschitz and $C_g$-Lipschitz in $\fZ$ respectively.
We call such problems $ (C_f,C_g)$-Lipschitz problems.
\end{asm}

To study the complexity of solving Problem (\ref{Simple BiO}), we make the following assumption on the algorithms.

\begin{asm}[zero-respecting algorithm class]
\label{algorithm class}
    An iterative method $\mathcal A$ can access the objective functions $f$ and $g$ only through a first-order black-box oracle, which takes a test point $\hat \vx$ as the input and returns $\partial f(\mathbf{\hat \vx}), \partial g(\mathbf{\hat \vx})$, where $\partial f(\hat \vx),\partial g(\hat \vx)$ are \textit{arbitrary} subgradients of the objective functions at $\hat \vx$. $\mathcal A$ generates a sequence of test points $\{\vx_k\}^K_{k=0}$ with
\begin{equation}\label{eq: zero-respecting}
\begin{aligned}
{\rm supp}(\vx_{k+1}) \subseteq {\rm supp}(\vx_0) \cup \left(\bigcup_{0 \le s \le k} {\rm supp} \left( \partial f(\vx_s)\right)\cup {\rm supp}(\partial g(\vx_s) )\right).
\end{aligned}
\end{equation}
\end{asm}

{
This assumption generalizes the standard definition of \textit{zero-respecting} algorithm for single-level minimization problems~\citep{nesterov2018lectures,carmon2020lower}. Many existing methods that incorporate a gradient step in the update for Problem (\ref{Simple BiO}) clearly fall within this class of algorithms, including those proposed by \cite{chen2024penalty, shen2023online, samadi2023achieving, merchav2023convex, sabach2017first, malitsky2017chambolle}, since the gradient step ensures that $\vx_k \in \vx_0 + \mathrm{Span}\{\partial f(\vx_0), \partial g(\vx_0), \dots, \partial f(\vx_{k-1}), \partial g(\vx_{k-1})\}$. In the appendix, we show that
the proposed algorithm in this paper and the conditional gradient type methods \citep{jiang2023conditional,cao2023projection} also satisfy the condition (\ref{eq: zero-respecting}) when the domain is a Euclidean ball centered at $\vx_0$ (see Proposition \ref{prop: zero-respecting} and Remark \ref{remark: CG-BiO}), which suffices to establish the negative results, including the intractability results for absolute optimal solutions and lower complexity bounds for weak optimal solutions.
}

The following concept of \textit{first-order zero-chain}, introduced by \citet[Section 2.1.2]{nesterov2018lectures}, plays an essential role in proving lower bounds for zero-respecting algorithms. In our paper, we leverage the chain-like structure to show the intractability of finding absolute optimal solutions.

\begin{dfn}[first-order zero-chain]\label{def: ZC}
    We call a differentiable function $h(\mathbf x): \mathbb R^q \to \mathbb R$ a first-order zero-chain if for any sequence $\{\mathbf x_k\}_{k\geq 0}$ satisfying
\begin{equation*}
\begin{aligned}
{\rm supp}(\vx_{k+1}) \subseteq  \bigcup_{0 \le s \le k} {\rm supp} \left( \nabla h(\vx_s) \right),\; k\geq 1; \quad \mathbf x_0  = \mathbf 0,
\end{aligned}
\end{equation*}
it holds that $\mathbf x_{k,[j]} = 0, k+1\leq j\leq q$.
\end{dfn}

Definition \ref{def: ZC} defines differentiable zero-chain functions. We can similarly define non-differentiable zero-chain, by requiring ${\rm supp} (\vx_{k+1})$ to be in 
$\bigcup_{0 \le s \le k} {\rm supp} \left( \partial h(\vx_s) \right)$,
where $\partial h(\vx)$ is a (possibly adversarial) subgradient of $h$ at $\vx$.

\section{Finding absolute optimal solutions is hard}
\label{Section 4}

Faced with \probref{Simple BiO}, a natural initial response is to seek an approximate solution $\hat \vx$ such that $f(\hat \vx)$ is as close to $f^*$ as possible, under the premise that $g(\hat \vx)$ is close to $g^*$. Such a goal is captured by the concept of $(\epsilon_f,\epsilon_g)$-absolute optimal solutions as defined in (\ref{absolute optimal sol}). However, it turns out that finding a $(\epsilon_f,\epsilon_g)$-absolute optimal solution is intractable for any zero-respecting first-order methods in both smooth and Lipschitz problems as shown in the following theorems. 

\begin{thm} \label{thm: smooth abs hard}
For any first-order algorithm $\mathcal A$ satisfying Assumption \ref{algorithm class} that runs for $T$ iterations and any initial point $\vx_0$, there exists a $(1,1)$-smooth instance of \probref{Simple BiO} such that
the optimal solution $\vx^*$ satisfies $\|\vx_0-\vx^*\|_2\leq 1$ and $\left|f(\vx_0)-f^*\right|\geq \frac 1{48}$. For the iterates $\{\vx_k\}_{k=0}^T$ generated by $\mathcal A$, the following holds:
\begin{equation*}
f(\vx_k)=f(\vx_0),\quad \forall 1\leq k\leq T.
\end{equation*}
\end{thm}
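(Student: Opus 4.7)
The plan is to adapt the classical zero-chain construction from single-level convex lower bounds so that (i) the gradients of $g$ unveil at most one new coordinate per query and (ii) all the sensitivity of $f$ lives in coordinates that the algorithm cannot touch within its first $T$ iterations. Replacing $(f, g)$ with $(f(\cdot - \vx_0), g(\cdot - \vx_0))$ lets us assume $\vx_0 = \vzero$ without loss of generality, so it suffices to design a hard instance on $\fZ = \fB(\vzero, 1)$.

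Concretely, I would fix the dimension $n = 2T + 1$ and let $A \in \BR^{n \times n}$ denote the tri-diagonal matrix with $2$ on the diagonal and $-1$ on the two off-diagonals. For $g$ I would use the Nesterov-type convex quadratic $g(\vx) = \tfrac{1}{8}\vx^\top A \vx - \tfrac{\beta}{4}\vx_{[1]}$, whose Hessian $A/4$ has operator norm strictly below $1$, making $g$ convex and $1$-smooth. Solving $A\vx = \beta \ve_1$ yields the unique minimizer $\vx^*_{g,[i]} = \beta(1 - i/(n+1))$, and I would choose $\beta$ so that $\|\vx^*_g\|_2 = 1$, which places $\vx^*_g \in \fZ$ and forces $\fX_g^* = \{\vx^*_g\}$. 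For $f$ I would take $f(\vx) = \tfrac{1}{2}\sum_{i = T + 1}^{n} \vx_{[i]}^2$, which is convex, $1$-smooth, and has the key feature that $\nabla f(\vx) = \vzero$ whenever $\mathrm{supp}(\vx) \subseteq \{1, \ldots, T\}$.

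The structural core of the argument is an induction showing $\mathrm{supp}(\vx_k) \subseteq \{1, \ldots, k\}$ for every $0 \le k \le T$. Assuming the claim through index $k \le T - 1$, every earlier iterate $\vx_j$ has zero tail coordinates, so $\nabla f(\vx_j) = \vzero$, while the tri-diagonal structure of $A$ gives $\mathrm{supp}(\nabla g(\vx_j)) \subseteq \{1, \ldots, j + 1\}$. Plugging both facts into the zero-respecting rule of Assumption~\ref{algorithm class} (using $\mathrm{supp}(\vx_0) = \emptyset$) yields $\mathrm{supp}(\vx_{k+1}) \subseteq \bigcup_{s \le k}\{1, \ldots, s + 1\} = \{1, \ldots, k + 1\}$, closing the induction. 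In particular $\vx_{k,[i]} = 0$ for every $i \ge T + 1$ and $k \le T$, so $f(\vx_k) = 0 = f(\vx_0)$ for every $1 \le k \le T$.

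It remains to lower-bound the gap, which is where I expect the main calculational obstacle. Since $f(\vx_0) = 0$ and $\fX_g^* = \{\vx^*_g\}$, the gap equals $\tfrac{1}{2}\sum_{i = T + 1}^{n} (\vx^*_{g,[i]})^2$, and substituting the closed form of $\vx^*_g$ together with the normalization $\|\vx^*_g\|_2 = 1$ collapses this to the dimensionless ratio $\bigl(\sum_{j = 1}^{T + 1} j^2 \bigr) \big/ \bigl( 2 \sum_{j = 1}^{n} j^2 \bigr)$. Using $\sum_{j=1}^{m} j^2 = m(m+1)(2m+1)/6$ simplifies this to $\tfrac{(T + 2)(2T + 3)}{4(2T + 1)(4T + 3)}$, and a short algebraic check then shows this quantity exceeds $\tfrac{1}{16}$ for every $T \ge 1$, which gives $|f(\vx_0) - f^*| > \tfrac{1}{16} > \tfrac{1}{48}$. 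The choice $n = 2T + 1$ is what makes this absolute gap survive the normalization: a shorter chain like $n = T + 1$ would force the tail mass to scale as $O(1/T)$ and destroy the $1/48$ lower bound.
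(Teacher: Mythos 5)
Your proposal is correct and follows essentially the same route as the paper: the lower level is Nesterov's tridiagonal zero-chain quadratic (scaled so its minimizer lies in the unit ball), the upper level is $\tfrac12\sum_{i>T}\vx_{[i]}^2$ so that $\nabla f$ vanishes on all iterates, and the support induction plus the tail-mass computation of the minimizer give the constant gap. The only differences are cosmetic — dimension $2T+1$ versus $2T$ and an exact normalization $\|\vx^*_g\|_2=1$ in place of the paper's bound $\|\vx^*\|_2\le 1$ — which is why you land on the slightly better constant $1/16$ where the paper settles for $1/48$.
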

\begin{thm}\label{thm: Lipschitz abs hard}
For any first-order algorithm $\mathcal A$ satisfying Assumption \ref{algorithm class} that runs for $T$ iterations and any initial point $\vx_0$, there exists a $(1,1)$-Lipschitz instance of \probref{Simple BiO} and some adversarial subgradients $\{\partial f(\vx_k),\partial g(\vx_k)\}_{k=0}^{T-1}$ such that
the optimal solution $\vx^*$ satisfies $\|\vx_0-\vx^*\|_2\leq 1$ and $\left|f(\vx_0)-f^*\right|\geq \frac 1{4}$. For the iterates $\{\vx_k\}_{k=0}^T$ generated by $\mathcal A$, the following holds
\begin{equation*}
f(\vx_k)=f(\vx_0),\quad \forall 1\leq k\leq T.
\end{equation*}
\end{thm}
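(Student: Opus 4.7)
The plan is to adapt the strategy of Theorem \ref{thm: smooth abs hard} to the Lipschitz setting, replacing the smooth zero-chain with a nonsmooth one built from maxima of affine pieces, coupled with adversarial subgradient selection. Take $\vx_0 = \vzero$ in dimension $n = T+1$, let $\fZ = \fB(\vzero,1)$, and define
\[
g(\vx) = \max\bigl\{\,0,\; -\vx_{[1]},\; -\vx_{[2]},\; \ldots,\; -\vx_{[T+1]}\,\bigr\},\qquad
f(\vx) = -\max\bigl\{\vx_{[T+1]} - \tfrac{1}{2},\; 0\bigr\}.
\]
Both are $1$-Lipschitz on $\BR^{T+1}$ as maxima of $1$-Lipschitz affine pieces, producing a legitimate $(1,1)$-Lipschitz instance. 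Since $g \geq 0$ with equality iff $\vx_{[i]}\geq 0$ for all $i\in[T+1]$, the lower-level solution set is $\fX_g^* = \{\vx \in \fZ : \vx_{[i]} \geq 0 \; \forall i \in [T+1]\}$, and the upper-level minimum on this set is $f^* = -1/2$, attained at $\vx^* = \ve_{T+1}$ (with $\ve_j$ denoting the $j$-th standard basis vector). In particular $\|\vx_0 - \vx^*\|_2 = 1$ and $f(\vx_0) = 0$, so $|f(\vx_0) - f^*| = 1/2 \geq 1/4$.

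The heart of the argument is an induction showing that the adversary can choose subgradients so that $\text{supp}(\vx_k) \subseteq \{1,\ldots,k\}$ for every $k \leq T$. Suppose this holds at step $k$. Because $\vx_{k,[T+1]} = 0 < 1/2$, the piece $\vx_{[T+1]} - 1/2$ is strictly dominated in a neighborhood of $\vx_k$, giving $\partial f(\vx_k) = \{\vzero\}$. For $g$, every piece $-\vx_{[i]}$ with $i \in \{k+1,\ldots,T+1\}$ equals $0$ at $\vx_k$; when $\vx_{k,[i]} \geq 0$ for all $i \leq k$, the adversary picks $\partial g(\vx_k) = -\ve_{k+1}$, which lies in the subdifferential and has support $\{k+1\}$. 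In the degenerate subcase where some revealed coordinate is negative, one has $g(\vx_k) > 0$ and all active pieces of the defining maximum already have indices in $\{1,\ldots,k\}$, so any valid subgradient is already supported in $\{1,\ldots,k\}$ and the support of the iterates cannot grow. In either case, Assumption \ref{algorithm class} yields $\text{supp}(\vx_{k+1}) \subseteq \{1,\ldots,k+1\}$.

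Iterating for $k = 0, 1, \ldots, T-1$ shows $\vx_{k,[T+1]} = 0$ throughout, hence $f(\vx_k) = -\max\{-1/2, 0\} = 0 = f(\vx_0)$, which is exactly the claimed conclusion. The main subtlety is certifying that the adversarial selection $-\ve_{k+1}$ genuinely belongs to $\partial g(\vx_k)$ in every non-degenerate case, and that the degenerate case does not let the algorithm leak into coordinate $T+1$; both reduce to a careful enumeration of active pieces in the defining maxima, in a spirit similar to the classical Nesterov--Yudin construction for nonsmooth convex lower bounds. Unlike the smooth setting of Theorem \ref{thm: smooth abs hard}, the adversarial freedom to pick subgradients plays an essential role here and is precisely what the statement of the theorem allows.
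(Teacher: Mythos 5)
There is a genuine gap: your upper-level function $f(\vx) = -\max\{\vx_{[T+1]} - \tfrac{1}{2},\, 0\} = \min\{\tfrac{1}{2} - \vx_{[T+1]},\, 0\}$ is the minimum of two affine functions, hence \emph{concave} rather than convex (check $t\mapsto -\max\{t-\tfrac12,0\}$ at $t=0,\tfrac12,1$: the value at the midpoint is $0 > -\tfrac14$). Problem (\ref{Simple BiO}) and Assumption \ref{asm:D} require $f$ to be convex, so your construction is not a valid $(1,1)$-Lipschitz instance of the problem class and the theorem is not established. The concavity is not incidental: you chose the sign precisely so that $f$ is flat near the iterates (zero subgradient) while still taking a strictly smaller value somewhere in $\fX_g^*$, and a convex function cannot do both once $\fX_g^*$ contains the origin. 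Indeed, if $f$ is convex and $\vzero\in\partial f(\vx_0)$ (which you need so as not to reveal coordinate $T+1$ at step $0$), then $\vx_0$ is a global minimizer of $f$; since your lower-level solution set $\fX_g^* = \{\vx\in\fZ : \vx\geq \vzero\}$ contains $\vx_0=\vzero$, this forces $f^* = \min_{\fX_g^*} f = f(\vx_0)$ and no gap is possible. So the flaw is structural in your choice of $g$, not just in the choice of $f$.

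The paper's proof resolves exactly this tension: it takes $g = r_{2T,1,1}$ (Proposition \ref{Lipschitz ZC}), whose argmin is the \emph{singleton} $\vx^* = -\tfrac{1}{\sqrt{2T}}\mathbf{1}$, bounded away from every coordinate subspace, and pairs it with the convex quadratic $f(\vx) = \tfrac12\sum_{j=T+1}^{2T}\vx_{[j]}^2$, which has zero gradient on the iterates (so the zero-chain argument goes through) yet satisfies $f(\vx^*) = \tfrac14 > 0 = f(\vx_k)$. Your overall strategy — a nonsmooth zero-chain for $g$ with adversarial subgradient selection, plus an $f$ that is invisible on the revealed coordinates — is the right template and matches the paper's, but to repair the argument you would need to replace both your $g$ (so that $\fX_g^*$ excludes the coordinate subspaces) and your $f$ (by a convex function vanishing with zero gradient on those subspaces but positive at the lower-level minimizer), which is essentially the paper's construction.
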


The proofs of Theorem \ref{thm: smooth abs hard} and Theorem \ref{thm: Lipschitz abs hard} rely on the concept of worst-case convex zero-chain (Proposition \ref{smooth ZC} and \ref{Lipschitz ZC}). We show that for any first-order zero-respecting algorithm that runs for $T$ iterations, there exists a ``hard instance'' such that $f(\vx_t)$ remains unchanged from the initial value $f(\vx_0)$ throughout the entire process. The complete proof is provided in Appendix \ref{app: proof for Sec 4}.

{The constructions of the above hardness results are motivated by the work \citep{chen2024finding}, which demonstrated that for general bilevel optimization problems of the form $\min_{\vx \in \BR^n,\vy \in \BR^m} f(\vx, \vy)$ subject to $\vy \in \arg \min_{\vz \in \BR^m} g(\vx,\vz)$, there exists a "hard instance" in which any zero-respecting algorithm always yields $\vx_k = \vx_0$ for all $1 \le k \le T$. 
Although our construction has a very similar high-level idea to \citep{chen2024finding}, the $f$ and $g$ we construct are different from the functions in \citep{chen2024finding} since our desired conclusion is different.}

\begin{remark}\label{remark: additional assumption}

Some previous works~\citep{jiang2023conditional,samadi2023achieving,cao2024accelerated} provide guarantees for finding  $(\epsilon_f,\epsilon_g)$-absolute optimal solutions. However, these works assume an additional H\"{o}lderian error bound condition~\citep{pang1997error} on $g$.
Our near-optimal methods for finding weak optimal solutions, proposed in the next section, also work well under this additional assumption and achieve the best-known convergence rate for absolute suboptimality both in smooth and Lipschitz settings. See  Appendix \ref{app: HEB} for further discussions.

\end{remark}

\section{Near-optimal methods for weak optimal solutions}\label{Section 5}

Due to the intractability of obtaining $(\epsilon_f,\epsilon_g)$-absolute optimal solutions of \probref{Simple BiO}, most existing works focus on developing first-order methods to find $(\epsilon_f,\epsilon_g)$-weak optimal solutions as defined in (\ref{optimal sol}). In this section, we establish the lower complexity bounds for finding weak optimal solutions and propose a new framework for simple bilevel problems named Functionally Constrained Bilevel Optimizer (FC-BiO) that achieves near-optimal convergence in both Lipschtitz and smooth settings.

\subsection{Lower complexity bounds}

{We first establish the lower complexity bounds for finding a $(\epsilon_f,\epsilon_g)$-weak optimal solution of $(L_f,L_g)$-smooth problems and $(C_f,C_g)$-Lipschitz problems. The results follow directly from existing lower bounds for single-level optimization problems, as simple bilevel optimization is a more general framework. Although the proof is straightforward, we present the results because establishing a precise lower bound is essential for demonstrating that an algorithm is truly near-optimal.}

\begin{thm}\label{thm: smooth lb}
Given $L_f,L_g,D>0$. For any first-order algorithm $\mathcal A$ satisfying Assumption \ref{algorithm class} and any initial point $\vx_0$, there exists a $(L_f,L_g)$-smooth instance of \probref{Simple BiO} on the domain $\fZ = \mathcal B(\vx_0,D)$ such that the optimal solution $\vx^*$ is contained in $\fZ$ and $\mathcal A$ needs at least $\Omega\left(\max\left\{\sqrt{\frac{L_f}{\epsilon_f}},\sqrt{\frac{L_g}{\epsilon_g}}\right\}D\right)$ iterations to find a $(\epsilon_f,\epsilon_g)$-weak optimal solution.
\end{thm}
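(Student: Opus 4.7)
The plan is to reduce the bilevel lower bound to two single-level smooth convex lower bounds by choosing $f$ and $g$ independently so that one of them is trivial. Since simple bilevel optimization strictly generalizes single-level constrained minimization, standard Nesterov-type zero-chain constructions immediately yield the two terms inside the max; the author's remark that the proof is straightforward amounts to exactly this reduction.

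For the $\sqrt{L_f/\epsilon_f}\, D$ term, I would take $g \equiv 0$, which is $L_g$-smooth for any $L_g$ and forces $\fX_g^* = \fZ = \fB(\vx_0, D)$. Under this choice the weak-optimality condition $g(\hat\vx) - g^* \le \epsilon_g$ holds for every iterate, so the task collapses to finding $\hat\vx \in \fZ$ with $f(\hat\vx) - f^* \le \epsilon_f$ over the Euclidean ball. Picking $f$ to be the classical Nesterov smooth-convex worst-case quadratic (a first-order zero-chain in the sense of Definition \ref{def: ZC}), rescaled to smoothness $L_f$ and with its minimizer lying at distance $D$ from $\vx_0$, gives the desired $\Omega(\sqrt{L_f/\epsilon_f}\, D)$ bound against any zero-respecting first-order method.

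For the $\sqrt{L_g/\epsilon_g}\, D$ term, I would apply the symmetric construction: take $f \equiv 0$ so that $f(\hat\vx) - f^* = 0$ is automatic regardless of $\fX_g^*$, and take $g$ as the Nesterov smooth-convex zero-chain rescaled to smoothness $L_g$ with minimizer at distance $D$. The weak-optimality requirement then reduces to driving $g(\hat\vx) - g^*$ below $\epsilon_g$, and the single-level lower bound gives $\Omega(\sqrt{L_g/\epsilon_g}\, D)$ iterations. Combining the two instances and noting that the adversary is free to pick the harder one produces the claimed $\Omega(\max\{\sqrt{L_f/\epsilon_f}, \sqrt{L_g/\epsilon_g}\}\, D)$ lower bound.

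The main thing to verify carefully, rather than a genuine obstacle, is that the Nesterov construction transfers cleanly under the bilevel version of Assumption \ref{algorithm class}: when one of $f, g$ is constant its subgradient vanishes and contributes an empty support in (\ref{eq: zero-respecting}), so the bilevel zero-respecting condition collapses to the usual single-level one, after which the classical zero-chain argument applies verbatim. Two minor bookkeeping items: the embedding dimension $n$ must be chosen at least as large as the iteration budget $T$ for the chain structure to bite (free in the construction), and the Nesterov hard instance's minimizer must be placed inside $\fB(\vx_0, D)$, which is standard by rescaling the coordinates.
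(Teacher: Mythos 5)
Your proposal is correct and matches the paper's own proof essentially verbatim: the paper also sets $g\equiv 0$ with $f=h_{2T+1,L_f,D}$ when $L_f/\epsilon_f\ge L_g/\epsilon_g$ (and symmetrically $f\equiv 0$ with $g$ the Nesterov chain otherwise), observes that the vanishing gradient of the constant function collapses the bilevel zero-respecting condition to the single-level one, and invokes the classical single-level lower bound. No further comment is needed.
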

\begin{thm}\label{thm: Lipschitz lb}
Given $C_f,C_g,D>0$. For any first-order algorithm $\mathcal A$ satisfying Assumption \ref{algorithm class} and any initial point $\vx_0$, there exists a $(C_f,C_g)$-Lipschitz instance of \probref{Simple BiO} on the domain $\fZ = \mathcal B(\vx_0,D)$ 
 such that the optimal solution $\vx^*$ is contained in $\fZ$ and $\mathcal A$ needs at least $\Omega\left(\max\left\{\frac{C_f^2}{\epsilon_f^2},\frac{C_g^2}{\epsilon_g^2}\right\}D^2 \right)$ iterations to find a $(\epsilon_f,\epsilon_g)$-weak optimal solution.
\end{thm}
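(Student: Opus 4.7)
The plan is to prove Theorem \ref{thm: Lipschitz lb} by constructing two separate ``one-sided'' hard instances and combining them. The core observation is that an $(\epsilon_f,\epsilon_g)$-weak optimal solution must, by definition, satisfy both $f(\hat \vx) - f^* \le \epsilon_f$ \emph{and} $g(\hat \vx) - g^* \le \epsilon_g$; so any instance on which either condition alone is hard to achieve already yields a lower bound. This reduces the bilevel lower bound to two standard single-level lower bounds for Lipschitz convex minimization over a Euclidean ball, which justifies the remark in the excerpt that the result ``follows directly from existing lower bounds for single-level optimization problems.''

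For the $C_f^2/\epsilon_f^2$ term, I would take $g \equiv 0$ on $\fZ = \mathcal B(\vx_0,D)$. Then $g$ is trivially $C_g$-Lipschitz, and $\fX_g^* = \fZ$, so the lower-level condition imposes no obstruction: \probref{Simple BiO} collapses to minimizing a general $C_f$-Lipschitz convex function over a Euclidean ball. I would then plug in the classical Nemirovski--Yudin / Nesterov worst-case zero-chain, e.g.\ a translated and rescaled piecewise-linear function of the form $f(\vx) = \frac{C_f}{\sqrt{T+1}} \max_{1\le j\le T+1}(\vx-\vx_0)_{[j]}$ (plus the small quadratic needed to make it a genuine zero-chain and to place $\vx^*$ inside $\mathcal B(\vx_0,D)$). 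By the standard argument, any zero-respecting algorithm of the form described in Assumption \ref{algorithm class}, when fed adversarial subgradients of the $\max$, can activate at most one new coordinate per iteration, so after $T$ steps its value suffers error $\Omega(C_f D/\sqrt{T})$. Setting this $\le \epsilon_f$ gives $T \ge \Omega(C_f^2 D^2/\epsilon_f^2)$.

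For the $C_g^2/\epsilon_g^2$ term, I would symmetrically take $f \equiv 0$. Then $f(\hat\vx) - f^* = 0 \le \epsilon_f$ for every $\hat\vx$, so the upper-level requirement becomes automatic, and the problem reduces to minimizing a general $C_g$-Lipschitz convex $g$ over $\fZ$ to additive accuracy $\epsilon_g$. The same Nesterov-style zero-chain construction applied with Lipschitz parameter $C_g$ yields $T \ge \Omega(C_g^2 D^2/\epsilon_g^2)$.

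Taking the worse of the two instances gives $T \ge \Omega\!\left(\max\{C_f^2/\epsilon_f^2,\; C_g^2/\epsilon_g^2\}\, D^2\right)$, matching the claim. The main obstacle is essentially bookkeeping: one must verify that the adversarial subgradient selection permitted in Assumption \ref{algorithm class} is exactly what the classical construction needs (the adversary picks the subgradient of $\max_j \vx_{[j]}$ that reveals only one new coordinate at each step), and that the translation and scaling of the hard functions are arranged so that $\vx^* \in \mathcal B(\vx_0,D)$ and the Lipschitz constants $C_f$ and $C_g$ are honored exactly, so that the prescribed constants $C_f,C_g,D$ appear correctly in the final bound.
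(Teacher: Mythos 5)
Your proposal is correct and follows essentially the same route as the paper: the paper also proves this bound by taking the hard instance to be $(f,g)=(r_{T,C_f,D},0)$ when $C_f/\epsilon_f \ge C_g/\epsilon_g$ and $(0,r_{T,C_g,D})$ otherwise, where $r_{T,C,D}$ is exactly the Nesterov worst-case Lipschitz zero-chain you describe, and then invokes the single-level lower bound $f(\vx_T)-f^* \ge \frac{CD}{2(1+\sqrt{T})}$. The only cosmetic difference is that you phrase it as "take the worse of two instances" while the paper cases on which ratio dominates; these are the same argument.
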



\subsection{Our proposed algorithms}\label{Subsection 5.2}

We now present a unified framework applicable to both smooth and Lipschitz problems. The proposed algorithms nearly match the lower complexity bounds in both settings up to logarithmic factors.

\paragraph {Problem reformulation}
We apply two steps of reformulation. First, we relax Problem (\ref{Simple BiO}) to Problem (\ref{relaxed func-cons}), where the constraint $\vx\in\fX_g^*$ is replaced by a relaxed functional constraint $\tilde g (\vx)\triangleq g(\vx)-\hat g ^*\leq 0$ and $\hat g ^*$ is an approximate solution to the lower level problem $\min_{\vx\in\fZ}g(\vx)$. Denoting $\hat f^*$ as the optimal value of Problem (\ref{relaxed func-cons}), the following lemma holds: 

\begin{lem} \label{lem: func-cons}
If $g^* \leq \hat g ^*\leq g^*+\frac {\epsilon_g }2$ and $\hat \vx$ is a $(\epsilon_f, \epsilon_g/2)$-weak optimal solution to 
Problem (\ref{relaxed func-cons}), \textit{i.e.} $f(\hat \vx) \le \hat f^* + \epsilon_f, \tilde g(\hat \vx) \le \epsilon_g/2$,
then $\hat \vx$ is a $(\epsilon_f, \epsilon_g)$-weak optimal solution to Problem (\ref{Simple BiO}).
\end{lem}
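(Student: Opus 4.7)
The plan is to verify directly the two inequalities defining a $(\epsilon_f,\epsilon_g)$-weak optimal solution of Problem (\ref{Simple BiO}), using only (i) the definition $\tilde g(\vx) = g(\vx) - \hat g^*$, (ii) the sandwich $g^* \le \hat g^* \le g^* + \epsilon_g/2$, and (iii) the hypothesis that $\hat\vx$ is a $(\epsilon_f,\epsilon_g/2)$-weak optimal solution of the relaxed problem (\ref{relaxed func-cons}).

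First I would handle the lower-level (constraint) inequality, which is the easy one. Unfolding $\tilde g$ gives $g(\hat\vx) = \tilde g(\hat\vx) + \hat g^* \le \epsilon_g/2 + \hat g^* \le \epsilon_g/2 + g^* + \epsilon_g/2 = g^* + \epsilon_g$, which is exactly the second condition in (\ref{optimal sol}).

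The upper-level inequality is the place requiring a small argument. The point is to compare the optimal values $\hat f^*$ of the relaxed functionally constrained Problem (\ref{relaxed func-cons}) and $f^*$ of Problem (\ref{Simple BiO}). I would show $\hat f^* \le f^*$ by exhibiting a feasible point of (\ref{relaxed func-cons}) that attains $f^*$: take any minimizer $\vx^\star$ of (\ref{Simple BiO}), so that $\vx^\star \in \fX_g^*$ and $g(\vx^\star) = g^*$; then $\tilde g(\vx^\star) = g^* - \hat g^* \le 0$ by the left half of the sandwich $g^* \le \hat g^*$, hence $\vx^\star$ is feasible for (\ref{relaxed func-cons}) and $\hat f^* \le f(\vx^\star) = f^*$. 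Chaining with the hypothesis $f(\hat\vx) \le \hat f^* + \epsilon_f$ yields $f(\hat\vx) - f^* \le \epsilon_f$, i.e., the first condition in (\ref{optimal sol}).

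There is no real obstacle here; the only subtlety to flag is that the argument is asymmetric in the two error budgets. We spend $\epsilon_g/2$ to approximate $g^*$ by $\hat g^*$ and another $\epsilon_g/2$ as the residual constraint slack in (\ref{relaxed func-cono}) --- together they add up to exactly $\epsilon_g$, which is precisely why the statement splits the $\epsilon_g$-budget in halves. The analogous split is not needed for $f$ because $\hat f^* \le f^*$ comes for free once $\hat g^* \ge g^*$, so the full $\epsilon_f$-budget can be used for the relaxed subproblem.
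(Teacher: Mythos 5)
Your proposal is correct and follows essentially the same route as the paper: establish $\hat f^* \le f^*$ by noting that any optimal point of Problem (\ref{Simple BiO}) is feasible for Problem (\ref{relaxed func-cons}) since $\tilde g(\vx^\star)=g^*-\hat g^*\le 0$, then chain the upper-level inequality and unfold $\tilde g$ to split the $\epsilon_g$ budget in halves for the lower level. Your closing remark on the asymmetry of the two error budgets is a nice observation, but no new ideas are needed beyond what the paper already does.
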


Therefore, it suffices to pursue an approximate solution of Problem (\ref{relaxed func-cons}). Second, Problem (\ref{relaxed func-cons}) is further reduced to the problem of finding the smallest root of the following auxiliary function:
\begin{equation}
\begin{aligned}\label{psi*}
\psi^*(t)=\min _{\vx\in \fZ} \left\{ \psi(t,\vx) \triangleq \max\left\{f(\vx)-t,\tilde g(\vx) \right\}\right\}.
\end{aligned}
\end{equation}
Such reformulation is introduced in \citet[Section 2.3]{nesterov2018lectures} with the following characterization. 
\begin{lem}[{\citet[Lemma 2.3.4]{nesterov2018lectures}}]
\label{lem: reformulation}
Let $\hat f^*$ be the optimal value of \probref{relaxed func-cons}, and let $\psi^*(t)$ be the auxiliary function as defined in (\ref{psi*}). The following holds:
\begin{enumerate}
    \item $\psi^*(t)$ is continuous, decreasing, and Lipschitz continuous with constant $1$. 
    \item $\hat f ^*$ is exactly the smallest root of $\psi^*(t)$.
\end{enumerate}
\end{lem}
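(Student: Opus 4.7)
The plan is to prove the two assertions of the lemma directly from the definition of $\psi^*$, exploiting that $\psi(t,\vx)$ is a pointwise maximum of two convex functions, one of which depends linearly on $t$.

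For the first assertion, I would first establish that $\psi^*$ is nonincreasing. Fix $t_1 \le t_2$. For every $\vx \in \fZ$, $f(\vx) - t_1 \ge f(\vx) - t_2$, so $\psi(t_1,\vx) \ge \psi(t_2,\vx)$; taking infima over $\vx \in \fZ$ yields $\psi^*(t_1) \ge \psi^*(t_2)$. Next I would prove the $1$-Lipschitz bound. The elementary inequality $\max\{a+s,b\} \le \max\{a,b\} + s$ for $s \ge 0$, applied with $a = f(\vx) - t_2$, $b = \tilde g(\vx)$, and $s = t_2 - t_1$, gives $\psi(t_1,\vx) \le \psi(t_2,\vx) + (t_2-t_1)$ for every $\vx$. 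Taking the infimum of both sides (and using compactness of $\fZ$ plus continuity of $\psi(t,\cdot)$ to guarantee the min is attained) yields $\psi^*(t_1) - \psi^*(t_2) \le t_2 - t_1$. Combined with monotonicity, this is precisely the $1$-Lipschitz property, from which continuity is immediate.

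For the second assertion, I would show the equivalence $\psi^*(t) \le 0 \iff \hat f^* \le t$. In one direction, pick an optimal $\vx^\star$ of Problem~(\ref{relaxed func-cons}), which exists since $f$ is continuous on the compact set $\fZ$ and the feasible region is nonempty (it contains any minimizer of $g$ on $\fZ$). Then $f(\vx^\star) = \hat f^*$ and $\tilde g(\vx^\star) \le 0$, so for any $t \ge \hat f^*$, $\psi(t,\vx^\star) = \max\{f(\vx^\star)-t,\tilde g(\vx^\star)\} \le 0$, hence $\psi^*(t) \le 0$. Conversely, if $\psi^*(t) \le 0$, compactness of $\fZ$ and continuity of $\psi(t,\cdot)$ guarantee some $\vx \in \fZ$ with $\psi(t,\vx) \le 0$, which means $f(\vx) \le t$ and $\tilde g(\vx) \le 0$; this $\vx$ is feasible for Problem~(\ref{relaxed func-cons}) and witnesses $\hat f^* \le f(\vx) \le t$.

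From this equivalence together with continuity of $\psi^*$, I would conclude: for any $t < \hat f^*$ we have $\psi^*(t) > 0$, so no root lies strictly below $\hat f^*$; at $t = \hat f^*$ the direct direction gives $\psi^*(\hat f^*) \le 0$, and continuity combined with $\psi^*(t) > 0$ for all $t < \hat f^*$ forces $\psi^*(\hat f^*) \ge 0$, so $\psi^*(\hat f^*) = 0$. Hence $\hat f^*$ is the smallest root.

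The only mild subtlety I anticipate is the justification that the infima defining $\psi^*(t)$ are attained, which is what allows us to pass cleanly from $\psi^*(t) \le 0$ to the existence of a feasible $\vx$ achieving value $\le t$; this is handled by Assumption~\ref{asm:D} (compactness of $\fZ$) together with continuity of $\psi(t,\cdot)$ in $\vx$. Everything else reduces to elementary manipulations of the $\max$ operation, so no further obstacle should arise.
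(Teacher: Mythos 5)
Your proposal is correct and follows essentially the same route as the paper's proof: the $1$-Lipschitz bound via the shift inequality for $\max$ (the paper writes it as $\psi^*(t+\Delta)=\min_\vx\max\{f(\vx)-t,\tilde g(\vx)+\Delta\}-\Delta\ge\psi^*(t)-\Delta$), and the root characterization via the equivalence between $\psi^*(t)\le 0$ and the existence of a feasible point with $f$-value at most $t$. Your treatment is, if anything, slightly more explicit than the paper's about attainment of the minimum on the compact set $\fZ$ and about the final continuity step giving $\psi^*(\hat f^*)=0$.
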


\paragraph{Bisection procedure}
\begin{algorithm}[t]
\caption{Functionally Constrained Bilevel Optimizer (FC-BiO)} \label{alg: smooth}
\begin{algorithmic}[1] 
\REQUIRE{ Problem parameters $\vx_0,D$, desired accuracy $\epsilon$, total number of iterations $T$, initial bounds $\ell,u$, and a subroutine for Problem (\ref{psi*}) $\fM$.}
\STATE Set $N=\left\lceil\log_2\frac{u-\ell}{\epsilon/2}\right\rceil$, $K=T/N$. Set $\bar \vx = \vx_0$.
\FOR {$k=0,\cdots,N-1$}
    \STATE Set $t=\frac{\ell+u}{2}$.
    \STATE 
    Solve with the subroutine $(\hat \vx_{(t)}, \hat \psi^*(t))=\fM(\bar \vx,D,t,K)$. 
    \STATE Set $\bar \vx = \hat \vx_{(t)}$
    \IF {$\hat \psi^*(t)>\frac \epsilon 2 $}
        \STATE Set $\ell=t$.
    \ELSE
        \STATE Set $u=t$.
    \ENDIF
\ENDFOR
\RETURN $\hat \vx = \hat \vx _{(u)}$ as the approximate solution.
\end{algorithmic}
\end{algorithm}
Based on the preceding reformulation, we propose Algorithm \ref{alg: smooth} (FC-BiO), which uses a bisection procedure to estimate the smallest root of $\psi^*(\cdot)$.
For now, we assume that the desired accuracy on upper-level and lower-level problems is the same, (\textit{i.e.} $\epsilon_f=\epsilon_g=\epsilon$). Later we will show in Corollary \ref{cor: scaling} that we can handle the case when $\epsilon_f \ne \epsilon_g$ by simply scaling the objectives.

Algorithm \ref{alg: smooth} applies the bisection method within an initial interval $[\ell,u]$ which contains the smallest root, $\hat f ^*$, for $N=\left\lceil\log_2\frac{u-\ell}{\epsilon/2}\right\rceil$ iterations. Similar to \citep{wang2024near}, the initial interval can be obtained by applying single-level first-order methods. The lower bound $\ell$ is obtained by solving the global minimum of the upper-level objective $f$ over $x\in\fZ$, while $u = f(\hat{\vx}_g)$ serves as a valid upper bound, where $\hat{\vx}_g$ is an approximate solution to the lower-level problem. Further details can be found in Appendix \ref{app: initial interval}.
In each iteration, we set $t=\frac {\ell+u}2$. 
To approximate the function value of $\psi^*(t)$, 
we apply a first-order algorithm $\fM$ to solve 
the discrete minimax problem (\ref{psi*}). 
For the Lipschitz setting, we let $\fM$ be the Subgradient Method (SGM, Algorithm \ref{alg: GD}) \citep[Section 3.1]{bubeck2015convex}. For the smooth setting, we let $\fM$ be the generalized accelerated gradient method (generalized AGM, Algorithm \ref{alg: disc-mini}) \citep[Algorithm 2.3.12]{nesterov2018lectures}.
These methods guarantee to find an approximate solution $\hat \vx _{(t)}$  of Problem (\ref{psi*}) such that
\begin{align}\label{eq: disc-mini}
\psi^*(t)\leq \hat \psi^*(t)\triangleq \psi(t,\hat \vx_{(t)})\leq \psi^*(t)+\frac \epsilon 2.
\end{align}
If  $\hat \psi^*(t)>\epsilon/2$, we update $\ell=t$. Conversely, if $\hat \psi^*(t)\leq \epsilon/2$, we set $u=t$. For the initial point of $\fM$, we exploit a \textit{warm-start} strategy (see more details in Appendix \ref{app: warm-start}). After completing $N$ iterations, we return $\hat \vx = \hat \vx _{(u)}$ as the output. As shown in Lemma \ref{lem: bisec process}, $\hat \vx$ is guaranteed to be a $(\epsilon,\epsilon)$-weak optimal solution to \probref{Simple BiO}.

We remark that since we can only solve an approximate value of $\psi^*(t)$, the upper bound $u$ might fall below $\hat{f}^*$ during the bisection process. But this is acceptable since we are only in pursuit of a weak optimal solution instead of an absolute optimal solution.

\begin{lem}\label{lem: bisec process}
If $\hat g^*$ satisfies $g^*\leq \hat g^*\leq g^*+\frac \epsilon 2$ and (\ref{eq: disc-mini}) holds for every $t$ in the process of Algorithm \ref{alg: smooth}, then the approximate solution $\hat \vx$ returned by Algorithm \ref{alg: smooth} is a $(\epsilon,\epsilon/2)$-weak optimal solution to Problem (\ref{relaxed func-cons}), and therefore a $(\epsilon,\epsilon)$-weak optimal solution to \probref{Simple BiO} by Lemma \ref{lem: func-cons}.
\end{lem}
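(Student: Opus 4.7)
The plan is to combine two ingredients: the monotone, root-based characterization of $\psi^*$ from Lemma \ref{lem: reformulation}, and the approximation guarantee (\ref{eq: disc-mini}) for the subroutine $\fM$. I will track through the bisection the bounds $\ell, u$ and the candidate iterate $\hat \vx_{(u)}$, and show that at termination these quantities certify an $(\epsilon, \epsilon/2)$-weak optimal solution to \probref{relaxed func-cons}; the passage to \probref{Simple BiO} is then immediate by Lemma \ref{lem: func-cons} since $g^* \le \hat g^* \le g^* + \epsilon/2$.

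The first step is to establish the invariant $\ell \le \hat f^*$, where $\hat f^*$ is the smallest root of $\psi^*$. The update $\ell \leftarrow t$ is triggered only when $\hat \psi^*(t) > \epsilon/2$, and (\ref{eq: disc-mini}) then gives $\psi^*(t) \ge \hat \psi^*(t) - \epsilon/2 > 0$. Since $\psi^*$ is decreasing and $\hat f^*$ is its smallest root (Lemma \ref{lem: reformulation}), $\psi^*(t) > 0$ forces $t < \hat f^*$, preserving the invariant. Combined with the standard halving argument for bisection, after $N = \lceil \log_2 \tfrac{u_0 - \ell_0}{\epsilon/2} \rceil$ iterations the final interval satisfies $u - \ell \le \epsilon/2$, and hence $u - \hat f^* \le \epsilon/2$.

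The second step is to analyze the returned point $\hat \vx = \hat \vx_{(u)}$. By construction $\hat \vx$ was produced at the iteration when the algorithm last performed the update $u \leftarrow t$, so $\hat \psi^*(u) = \psi(u, \hat \vx) \le \epsilon/2$. Unpacking the max in (\ref{psi*}) yields simultaneously $f(\hat \vx) - u \le \epsilon/2$ and $\tilde g(\hat \vx) \le \epsilon/2$. Chaining with the bound from the first step gives $f(\hat \vx) - \hat f^* \le (u - \hat f^*) + \epsilon/2 \le \epsilon$, which together with the constraint bound establishes $(\epsilon, \epsilon/2)$-weak optimality for \probref{relaxed func-cons}. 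Applying Lemma \ref{lem: func-cons} promotes this to an $(\epsilon, \epsilon)$-weak optimal solution of \probref{Simple BiO}.

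The main subtlety is that the "natural" invariant $\hat f^* \le u$ need not hold: because $\hat \psi^*$ overapproximates $\psi^*$ by up to $\epsilon/2$, the algorithm can legitimately set $u \leftarrow t$ with $t < \hat f^*$. The argument above is written to be insensitive to this: only $u - \hat f^* \le u - \ell \le \epsilon/2$ is used, which holds trivially when $u < \hat f^*$. A second minor point is to guarantee that $\hat \vx_{(u)}$ is well-defined at termination; this is ensured by the choice of initial interval in Appendix \ref{app: initial interval}, where $u_0$ is paired with a feasible candidate that can serve as $\hat \vx_{(u_0)}$ should $u$ never be refreshed by the bisection.
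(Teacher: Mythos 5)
Your proof is correct and follows essentially the same route as the paper's: maintain the invariant $\ell \le \hat f^*$ via (\ref{eq: disc-mini}) and the monotonicity/root characterization of $\psi^*$ from Lemma \ref{lem: reformulation}, observe that $\hat\psi^*(u) \le \epsilon/2$ always holds (including for the initial $u$ paired with $\hat\vx_g$), and chain $f(\hat\vx) \le u + \epsilon/2 \le \ell + \epsilon \le \hat f^* + \epsilon$ together with $\tilde g(\hat\vx) \le \epsilon/2$. Your explicit remarks on why $\hat f^* \le u$ is not needed and on the well-definedness of $\hat\vx_{(u)}$ when $u$ is never refreshed match points the paper also makes.
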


According to this lemma, $\fO(\log(\nicefrac{1}{\epsilon}))$ iterations of the outer loops are sufficient to find a $(\epsilon,\epsilon)$-weak optimal solution. 
Next, we will discuss the process and complexity of the subroutines in detail.

\subsection{Subroutines and total complexity}\label{Subsection 5.3}

To proceed with the bisection process, we need to invoke a subroutine $\mathcal M$ to approximate the function value of $\psi^*(t)=\min_{\vx\in \fZ}\psi(t,\vx)$ in each outer iteration, where  $\psi(t,\vx)=\max\{f(\vx)-t,\tilde g(\vx)\}$. This reduces to a discrete minimax optimization problem (\probref{psi*}) for a given $t$. Below we demonstrate the subroutines to solve this problem in Lipschitz and smooth settings.

\paragraph{Lipschitz setting}

When $f$ and $g$ are convex and $C_f$ and $C_g$-Lipschitz respectively, it holds that $\psi(t,\vx)$ is also convex and Lipschitz with constant $\max\{C_f,C_g\}$. 
In this case, setting $\fM$ to be the Subgradient Method (SGM) \citep[Section 3.1]{bubeck2015convex} applied on $\psi(t,\cdot)$ (Algorithm \ref{alg: GD}) directly achieves the optimal convergence rate. 
To implement the SGM method, the subgradient of $\psi(t,\vx)$ needs to be computed as given in the following proposition:

\begin{prop}[{\citet[Lemma 3.1.13]{nesterov2018lectures}}] \label{prop: sub gradient}
Consider $\psi(t,\vx)=\max\{f(\vx)-t,\tilde g(\vx)\}$ where $f$ and $g$ are convex functions. For given $t$. We have
\begin{align*}
    \partial_{\vx} \psi(t,\vx) = 
    \begin{cases}
        \partial f(\vx), & f(\vx) - t > \tilde g(\vx); \\
        \partial \tilde g(\vx), & f(\vx) - t < \tilde g(\vx); \\
        {\rm Conv}\left\{ \partial f(\vx), \partial \tilde g(\vx) \right\}, & f(\vx) - t = \tilde g(\vx).
    \end{cases}
\end{align*}
\end{prop}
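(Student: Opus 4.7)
The proposition is the standard subdifferential formula for a pointwise maximum of two convex functions, specialized to $h_1(\vx) = f(\vx)-t$ and $h_2(\vx) = \tilde g(\vx)$. My plan is to first observe that $\psi(t,\cdot)$ is convex since it is the maximum of two convex functions, so the subdifferential $\partial_{\vx}\psi(t,\vx)$ is a well-defined nonempty closed convex set whenever $\vx$ lies in the relative interior of $\text{dom}\,\psi(t,\cdot) = \BR^n$. Throughout I would work with the basic subgradient inequality $\vs \in \partial_{\vx}\psi(t,\vx)$ iff $\psi(t,\vy) \ge \psi(t,\vx) + \langle \vs, \vy - \vx\rangle$ for all $\vy \in \BR^n$, and note that $\partial \tilde g(\vx) = \partial g(\vx)$ since $\tilde g$ differs from $g$ by a constant.

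Next I would dispatch the two strict cases. If $f(\vx) - t > \tilde g(\vx)$, then by continuity of $f$ and $g$ there exists a neighborhood $U$ of $\vx$ on which $f(\vy) - t > \tilde g(\vy)$, so $\psi(t,\vy) = f(\vy) - t$ on $U$. Since subdifferentials of convex functions depend only on the local behavior of the function, $\partial_{\vx}\psi(t,\vx) = \partial f(\vx)$. The symmetric argument gives $\partial_{\vx}\psi(t,\vx) = \partial \tilde g(\vx)$ in the case $f(\vx) - t < \tilde g(\vx)$.

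The main step is the equality case $f(\vx) - t = \tilde g(\vx)$, where I must show both inclusions. The inclusion $\text{Conv}\{\partial f(\vx),\partial \tilde g(\vx)\} \subseteq \partial_{\vx}\psi(t,\vx)$ is immediate: for any $\lambda \in [0,1]$, $\vs_f \in \partial f(\vx)$, $\vs_g \in \partial \tilde g(\vx)$, and any $\vy$, combining the two subgradient inequalities gives
\begin{align*}
\lambda(f(\vy)-t) + (1-\lambda)\tilde g(\vy) \ge \psi(t,\vx) + \langle \lambda \vs_f + (1-\lambda)\vs_g, \vy-\vx\rangle,
\end{align*}
and the left-hand side is bounded above by $\psi(t,\vy)$. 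The reverse inclusion is the delicate part and is the main obstacle. I would prove it by a separation argument: suppose some $\vs \in \partial_{\vx}\psi(t,\vx)$ were not in the closed convex set $C = \text{Conv}\{\partial f(\vx) \cup \partial \tilde g(\vx)\}$, then by the Hahn--Banach separation theorem there is a direction $\vd$ with $\langle \vs, \vd\rangle > \max_{\vs' \in C}\langle \vs', \vd\rangle = \max\{f'(\vx;\vd),\tilde g'(\vx;\vd)\}$, using the standard identity equating the support function of $\partial h$ with the directional derivative $h'(\vx;\vd)$ for convex $h$. Since $\psi'(t,\vx;\vd) = \max\{f'(\vx;\vd),\tilde g'(\vx;\vd)\}$ at an active point, this contradicts $\langle \vs,\vd\rangle \le \psi'(t,\vx;\vd)$, which holds for every $\vs \in \partial_{\vx}\psi(t,\vx)$. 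This yields the desired equality, completing the proof. Alternatively, since this is precisely Lemma 3.1.13 of \citet{nesterov2018lectures}, one may simply cite that reference rather than reproducing the separation argument in full.
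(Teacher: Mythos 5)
Your proof is correct and is essentially the standard argument (directional derivatives plus separation) that the paper itself does not reproduce: the paper simply cites \citet[Lemma~3.1.13]{nesterov2018lectures} for this proposition, as you note at the end. The only point worth making explicit is that $\mathrm{Conv}\{\partial f(\vx)\cup\partial\tilde g(\vx)\}$ is closed because both subdifferentials are compact ($f$ and $g$ are finite convex functions on $\BR^n$, hence locally Lipschitz), which is what licenses the separation step.
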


Algorithm \ref{alg: GD} has the following convergence guarantee:

\begin{algorithm}[t]  
\caption{Solve Problem (\ref{psi*}) with SGM $(\vx_0,D,t,K)$ } \label{alg: GD}
\begin{algorithmic}[1] 
\REQUIRE{ Problem parameters $\vx_0,D,t$, total number of iterations $K$.}
\STATE Set $\eta = D / (C \sqrt{K})$, where $C = \max \{C_f,C_g \}$.
\FOR {$k=0,\cdots,K-1$}
    \STATE Obtain a subgradient $\vs \in \partial_{\vx} \psi(t,\vx)$ by Proposition \ref{prop: sub gradient}.
    \STATE Update $\vx_{k+1} = \Pi_{\mathcal Z} (\vx_k - \eta \vs)$.
\ENDFOR
\RETURN $\hat \vx_{(t)}= \frac{1}{K} \sum_{k=0}^{K-1} \vx_k $ as the approximate solution and $\hat \psi^*(t)=\max\{f(\hat \vx_{(t)})-t,\tilde g(\hat \vx_{(t)})\}$ as the approximate value.
\end{algorithmic}
\end{algorithm}

\begin{lem}[{\citet[Theorem 3.2]{bubeck2015convex}}] \label{lem: GD}
Suppose Assumption \ref{asm:D} and \ref{Lipschitz Problem} hold. When $K\geq \frac{4D^2 C^2}{\epsilon^2}$, the approximate value $\hat \psi^*(t)$ produced by Algorithm \ref{alg: GD} satisfies $\psi^*(t)\leq \hat \psi^*(t)\leq \psi^*(t)+\frac \epsilon 2$, where $C = \max\{C_f,C_g\}$.
\end{lem}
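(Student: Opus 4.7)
The plan is to verify that Algorithm \ref{alg: GD} is exactly the standard projected subgradient method applied to a convex $C$-Lipschitz function over a compact convex set of diameter $D$, and then invoke the classical convergence bound cited as Theorem 3.2 of \citet{bubeck2015convex}.

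First I would check the two structural properties of $\psi(t,\cdot)$ needed to apply the classical bound. Fix $t\in\BR$. Since $f$ and $\tilde g = g-\hat g^*$ are convex on $\fZ$ by Assumption \ref{asm:D}, the pointwise maximum $\psi(t,\vx)=\max\{f(\vx)-t,\tilde g(\vx)\}$ is convex in $\vx$. Moreover, under Assumption \ref{Lipschitz Problem}, $f$ is $C_f$-Lipschitz and $\tilde g$ is $C_g$-Lipschitz on $\fZ$, so for any $\vx,\vy\in\fZ$,
\begin{equation*}
|\psi(t,\vx)-\psi(t,\vy)|\le \max\{|f(\vx)-f(\vy)|,|\tilde g(\vx)-\tilde g(\vy)|\}\le C\|\vx-\vy\|_2,
\end{equation*}
with $C=\max\{C_f,C_g\}$. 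Consequently, every element of $\partial_{\vx}\psi(t,\vx)$ has norm at most $C$; in particular Proposition \ref{prop: sub gradient} guarantees that the vector $\vs$ retrieved in line 3 of Algorithm \ref{alg: GD} is a valid subgradient with $\|\vs\|_2\le C$.

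Next I would note that the update in line 4 is exactly the projected subgradient step on the compact convex set $\fZ$, and the output $\hat\vx_{(t)}=\frac1K\sum_{k=0}^{K-1}\vx_k$ is the standard Polyak averaged iterate. The textbook convergence guarantee (\citet[Theorem 3.2]{bubeck2015convex}) for a convex $C$-Lipschitz objective over a set of diameter $D$ with the stepsize $\eta=D/(C\sqrt{K})$ used in line 1 then gives
\begin{equation*}
\psi(t,\hat\vx_{(t)})-\psi^*(t)\le \frac{CD}{\sqrt{K}}.
\end{equation*}
Plugging in the hypothesis $K\ge 4D^2 C^2/\epsilon^2$ yields $\psi(t,\hat\vx_{(t)})-\psi^*(t)\le \epsilon/2$. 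The lower inequality $\psi^*(t)\le \hat\psi^*(t)$ is immediate: by construction $\hat\psi^*(t)=\psi(t,\hat\vx_{(t)})$ and $\hat\vx_{(t)}\in\fZ$ (as a convex combination of projected iterates), so it cannot be smaller than the minimum $\psi^*(t)$ of $\psi(t,\cdot)$ over $\fZ$.

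There is no real obstacle here; the only care needed is to point out that $\psi(t,\cdot)$ inherits the right Lipschitz constant from $f$ and $\tilde g$ (and not, say, $C_f+C_g$), which is what makes $C=\max\{C_f,C_g\}$ appear in the rate. Everything else is a direct invocation of the well-known SGM analysis.
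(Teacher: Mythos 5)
Your proposal is correct and follows essentially the same route as the paper's proof: both reduce the claim to a direct application of \citet[Theorem 3.2]{bubeck2015convex} for the projected subgradient method on the convex, $C$-Lipschitz function $\psi(t,\cdot)$ over $\fZ$, yielding the $CD/\sqrt{K}$ rate and hence $\epsilon/2$ accuracy for the stated $K$. Your version is slightly more careful in that it explicitly verifies the Lipschitz constant $\max\{C_f,C_g\}$ of the pointwise maximum and the trivial lower inequality $\psi^*(t)\le\hat\psi^*(t)$, both of which the paper leaves implicit.
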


We refer to Algorithm \ref{alg: smooth} with SGM subroutine  (Algorithm \ref{alg: GD}) as FC-BiO\textsuperscript{\texttt{Lip}}. Combining with Lemma \ref{lem: bisec process}, we obtain the following total iteration complexity of FC-BiO\textsuperscript{\texttt{Lip}}:

\begin{thm}[Lipschitz setting]\label{thm: lipschitz ub}
Suppose Assumption \ref{asm:D} and \ref{Lipschitz Problem} hold and $\epsilon_f = \epsilon_g = \epsilon$. When 
\begin{align*}
    T\geq \left\lceil\log_2\frac{u-\ell}{\epsilon/2}\right\rceil\frac{4\max\{C_f^2,C_g^2\}}{\epsilon^2} D^2,
\end{align*}
the approximate solution $\hat \vx$ produced by FC-BiO\textsuperscript{\texttt{Lip}} is a $(\epsilon,\epsilon)$-weak optimal solution to \probref{Simple BiO}.
\end{thm}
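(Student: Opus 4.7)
The plan is to combine the inner-loop accuracy guarantee of Lemma \ref{lem: GD} with the outer-loop bisection correctness of Lemma \ref{lem: bisec process}. Since Algorithm \ref{alg: smooth} splits the total budget $T$ into $N = \lceil \log_2((u-\ell)/(\epsilon/2)) \rceil$ outer iterations with $K = T/N$ inner iterations each, the hypothesis $T \geq N \cdot 4D^2 \max\{C_f^2,C_g^2\}/\epsilon^2$ immediately yields $K \geq 4D^2 C^2 / \epsilon^2$ with $C = \max\{C_f, C_g\}$, which is exactly the budget required by the subroutine.

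I would first verify that $\psi(t,\vx) = \max\{f(\vx)-t,\tilde g(\vx)\}$ is convex (as a pointwise maximum of two convex functions) and $C$-Lipschitz in $\vx$ on $\fZ$ whenever $f$ is $C_f$-Lipschitz and $g$ is $C_g$-Lipschitz, since $\tilde g = g - \hat g^*$ inherits the Lipschitz constant of $g$. This verifies the premises of Lemma \ref{lem: GD} applied to $\psi(t,\cdot)$ over the convex, diameter-$D$ compact set $\fZ$, with subgradients supplied by Proposition \ref{prop: sub gradient}. The choice of $K$ above then guarantees that at every outer iteration of Algorithm \ref{alg: smooth} the SGM subroutine (Algorithm \ref{alg: GD}) returns $(\hat \vx_{(t)}, \hat \psi^*(t))$ satisfying the accuracy condition (\ref{eq: disc-mini}), namely $\psi^*(t) \leq \hat \psi^*(t) \leq \psi^*(t) + \epsilon/2$.

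Next, assuming the initial interval $[\ell,u]$ and the lower-level estimate $\hat g^*$ with $g^* \leq \hat g^* \leq g^* + \epsilon/2$ have been produced by the procedure in Appendix \ref{app: initial interval}, Lemma \ref{lem: bisec process} applies verbatim: the bisection output $\hat \vx$ is an $(\epsilon, \epsilon/2)$-weak optimal solution to the reformulated Problem (\ref{relaxed func-cons}), and by Lemma \ref{lem: func-cons} this lifts to an $(\epsilon, \epsilon)$-weak optimal solution of Problem (\ref{Simple BiO}), which is the desired conclusion.

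The only subtlety I anticipate is that the standard SGM convergence bound depends on $\|\vx_0 - \vx^*_{\psi(t,\cdot)}\|_2$, not directly on $D$; however, since the warm-started starting point $\bar \vx$ always lies in $\fZ$ (it is either the original $\vx_0$ or a previous SGM output projected onto $\fZ$), this distance is uniformly bounded by the diameter $D$, so Lemma \ref{lem: GD} as stated applies at every outer call. The warm-start can only improve the constants in practice but need not be quantified to establish the stated worst-case rate, which is the main reason the argument goes through cleanly without tracking cross-iteration dependencies.
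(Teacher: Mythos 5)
Your proposal is correct and follows exactly the paper's own argument, which simply combines Lemma \ref{lem: bisec process} with Lemma \ref{lem: GD} after observing that $K = T/N \geq 4D^2\max\{C_f^2,C_g^2\}/\epsilon^2$. The additional checks you supply (Lipschitzness of $\psi(t,\cdot)$, the warm-start point remaining in $\fZ$ so the diameter bound applies) are sound and merely make explicit what the paper leaves implicit.
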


\paragraph{Smooth setting}
The optimal first-order method for optimizing smooth objective functions is the celebrated Accelerated Gradient Method (AGM) \citep[Section 2.2]{nesterov2018lectures} proposed by Nesterov. 
In contrast to the Lipschitz setting, AGM cannot be applied to $\psi(t,\vx)=\max\{f(\vx)-t,\tilde g(\vx)\}$ directly when $f$ and $g$ are convex and smooth, as the smoothness condition no longer holds for $\psi(t,\cdot)$. However, \citet[Section 2.3]{nesterov2018lectures} showed that by simply replacing the gradient step in standard AGM with the following \textit{gradient mapping} (\citet[Definition 2.3.2]{nesterov2018lectures})
\begin{align} \label{gradient mapping}
\begin{split}
     \vx_{k+1}=\mathop{\arg\min}_{\vx \in \fZ} \Bigg \{ \bar \psi(t,\vx; \vy_k) \triangleq \max\Bigg \{f(\vy_k)+\langle \nabla f(\vy_k),\vx-\vy_k\rangle+\frac L 2\|\vx-\vy_k\|_2^2-t, \\
    \tilde g(\vy_k)+\langle \nabla \tilde g(\vy_k),\vx-\vy_k\rangle+\frac L 2\|\vx-\vy_k\|_2^2 \Bigg \} \Bigg \},
\end{split}
\end{align}
the optimal rate of $\mathcal O (\sqrt{L/\epsilon})$ can be achieved for \probref{psi*}. Here $\{\vx_k\},\{\vy_k\}$ are the test point sequences and $L=\max\{L_f,L_g\}$. Solving $\vx_{k+1}$ for general discrete minimax problems (where the maximum is taken over potentially more than two objective functions, as studied in \citet[Section 2.2]{nesterov2018lectures}), reduces to a quadratic programming (QP) problem and may not be efficiently solvable.
However, we demonstrate that in our problem setup, $\vx_{k+1}$ can be expressed in the form of a projection onto the feasible set $\fZ$, or onto the intersection of $\fZ$ and a hyperplane. A similar subproblem also arises in \citet[Remark 3.2]{cao2024accelerated}. When the structure of $\mathcal H$ is simple, such as when it is a Euclidean ball, the subproblem may admit a closed-form solution. Otherwise, Dykstra’s projection algorithm can be applied to solve it \citep{combettes2011proximal}.

\begin{prop} \label{prop: gradient mapping}
    Define the descent step candidates
    \begin{equation}
\begin{aligned}
\vx_1 
&=\Pi_{\fZ}\left(\vy_k-\frac{1}{L}\nabla f(\vy_k)\right),\quad
\vx_2 
=\Pi_{\fZ}\left(\vy_k-\frac{1}{L}\nabla \tilde g(\vy_k)\right),\\
\vx_3 &= \Pi_{\mathcal Z \cap \mathcal H}\left(\vy_k-\frac{1}{L}\nabla f(\vy_k)\right),
\end{aligned}
\end{equation}
where $\mathcal H\subset \BR^n $ is a hyperplane defined by 
\begin{align*}
    \mathcal H = \{\vx \ |\ f(\vy_k)-\tilde g(\vy_k)+\langle \nabla f(\vy_k)-\nabla \tilde g(\vy_k),\vx-\vy_k\rangle-t = 0 \}.
\end{align*}
Then the solution to (\ref{gradient mapping}) is $\vx_{k+1}=\argmin_{\{\vx_i|i\in\{1,2,3\}\}} \bar \psi(t,\vx_i; \vy_k)$.
\end{prop}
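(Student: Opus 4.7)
The plan is to characterize the minimizer of the convex max-objective $\bar{\psi}(t, \vx; \vy_k) = \max\{A(\vx), B(\vx)\}$ over $\vx \in \fZ$, where I abbreviate the two linearized-plus-quadratic branches as $A(\vx) = f(\vy_k) + \langle \nabla f(\vy_k), \vx - \vy_k \rangle + \frac{L}{2}\|\vx - \vy_k\|_2^2 - t$ and $B(\vx) = \tilde g(\vy_k) + \langle \nabla \tilde g(\vy_k), \vx - \vy_k \rangle + \frac{L}{2}\|\vx - \vy_k\|_2^2$. The starting observation is that, because $A$ and $B$ share the same quadratic term $\frac{L}{2}\|\vx - \vy_k\|_2^2$, completing the square rewrites $A(\vx) = \frac{L}{2}\|\vx - (\vy_k - \frac{1}{L}\nabla f(\vy_k))\|_2^2 + c_A$ and an analogous identity for $B$ with shift $\vy_k - \frac{1}{L}\nabla \tilde g(\vy_k)$, for scalars $c_A, c_B$ independent of $\vx$. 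This identifies $\vx_1$ as $\argmin_{\vx \in \fZ} A(\vx)$ and $\vx_2$ as $\argmin_{\vx \in \fZ} B(\vx)$. Since $A - B$ is affine and its zero set is precisely the hyperplane $\mathcal H$, we have $A = B$ on $\fZ \cap \mathcal H$, so the common minimizer of either function over this intersection is $\vx_3$.

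With this setup, I would take any global minimizer $\vx^*$ of $\bar{\psi}(t, \cdot; \vy_k)$ over $\fZ$ and split into three cases according to the sign of $A(\vx^*) - B(\vx^*)$. If $A(\vx^*) > B(\vx^*)$, continuity yields a neighborhood of $\vx^*$ inside $\fZ$ on which $\bar{\psi} = A$, so $\vx^*$ is a local minimizer of $A$ over $\fZ$; convexity (indeed strong convexity as a quadratic) of $A$ upgrades this to a global minimizer, forcing $\vx^* = \vx_1$. Symmetrically, $A(\vx^*) < B(\vx^*)$ gives $\vx^* = \vx_2$. The remaining case is $A(\vx^*) = B(\vx^*)$, which places $\vx^* \in \fZ \cap \mathcal H$; on that set $\bar{\psi} = A = B$, so $\vx^*$ minimizes $A$ over $\fZ \cap \mathcal H$, giving $\vx^* = \vx_3$.

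Combining the three cases yields $\vx^* \in \{\vx_1, \vx_2, \vx_3\}$, whence $\vx_{k+1} = \vx^* = \argmin_{i \in \{1, 2, 3\}} \bar{\psi}(t, \vx_i; \vy_k)$, establishing the proposition. The step that deserves the most care is the local-to-global promotion in Cases~1 and 2, where strong convexity of the quadratics $A$ and $B$ is essential, together with the boundary situation $\fZ \cap \mathcal H = \emptyset$. In that boundary situation, affinity of $A - B$ together with convexity of $\fZ$ forces $A - B$ to have constant sign on $\fZ$, so only Cases~1 or 2 can occur; the candidate $\vx_3$ is then vacuous and the set $\{\vx_1, \vx_2\}$ already contains the optimum, which keeps the conclusion intact.
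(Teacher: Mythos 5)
Your proof is correct and follows essentially the same route as the paper: complete the square to identify $\vx_1,\vx_2,\vx_3$ as the constrained minimizers of the two quadratic branches (and of their common restriction to $\fZ\cap\mathcal H$), then argue by cases on the sign of the difference of the branches at the optimum — the paper packages your Cases 1 and 2 as a separate lemma on minimizers of maxima of strictly convex functions, proved by the same local perturbation idea. Your explicit treatment of the degenerate situation $\fZ\cap\mathcal H=\emptyset$ is a small addition the paper leaves implicit.
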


We present the generalized AGM subroutine (Algorithm \ref{alg: disc-mini}) and its convergence rate below.

\begin{algorithm}[t]  
\caption{Solve Problem (\ref{psi*}) with Generalized AGM $(\vx_0,D,t,K)$ {\citep[Algorithm 2.3.12]{nesterov2018lectures}}} \label{alg: disc-mini}
\begin{algorithmic}[1] 
\REQUIRE{ Problem parameters $\vx_0,D,t$, total number of iterations $K$.}
\STATE  Set $\vy_0 = \vx_0$, $\alpha_0=\frac 12$.
\FOR {$k=0,\cdots K-1$}
    \STATE Compute $\vx_{k+1}$ as the solution to (\ref{gradient mapping}) by Proposition \ref{prop: gradient mapping}.
    \STATE Compute $\alpha_{k+1}$ from the equation $\alpha_{k+1}^2=(1-\alpha_{k+1})\alpha_k^2$.
    \STATE Set $\beta_k=\frac{\alpha_k(1-\alpha_k)}{\alpha_{k}^2+\alpha_{k+1}}$, $\vy_{k+1}=\vx_{k+1}+\beta_k (\vx_{k+1}-\vx_k)$.
\ENDFOR
\RETURN $\hat \vx_{(t)}=\vx_{K}$ as the approximate solution and $\hat \psi^*(t)=\max\{f(\vx_{K})-t,\tilde g(\vx_{K})\}$ as the approximate value.
\end{algorithmic}
\end{algorithm}

\begin{lem}[{\citet[Theorem 2.3.5]{nesterov2018lectures}}]\label{thm: disc-mini}
Suppose Assumption \ref{asm:D} and \ref{Smooth Problem} hold.When $K\geq D \sqrt{\frac{12L}{\epsilon}}$, the approximate value $\hat \psi^*(t)$ produced by Algorithm \ref{alg: disc-mini} satisfies $\psi^*(t)\leq \hat \psi^*(t)\leq \psi^*(t)+\frac \epsilon 2$, where $L = \max\{L_f,L_g\}$.
\end{lem}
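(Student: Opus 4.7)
The plan is to follow the template of Nesterov's analysis for the generalized accelerated gradient method applied to convex composite minimization, adapted to the specific max-structure of $\psi(t,\cdot)$. The subroutine is exactly Algorithm 2.3.12 of \citep{nesterov2018lectures}, so the goal is to verify that our setting satisfies the hypotheses of Theorem 2.3.5 there.

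First, I would observe that under Assumption \ref{Smooth Problem}, both $f-t$ and $\tilde g$ are convex and $L$-smooth on $\fZ$ with $L=\max\{L_f,L_g\}$. The standard quadratic upper bound then yields, for every $\vy_k\in\fZ$ and every $\vx\in\fZ$,
\begin{equation*}
f(\vx)-t\le f(\vy_k)+\langle \nabla f(\vy_k),\vx-\vy_k\rangle+\tfrac{L}{2}\|\vx-\vy_k\|_2^2-t,
\end{equation*}
and similarly for $\tilde g$. Taking the pointwise maximum, $\psi(t,\vx)\le \bar\psi(t,\vx;\vy_k)$ for all $\vx\in\fZ$, so the update (\ref{gradient mapping}) is a bona fide \emph{gradient mapping} of $\psi(t,\cdot)$ at $\vy_k$. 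Proposition \ref{prop: gradient mapping} guarantees that this gradient mapping is actually computable in closed form (a projection onto $\fZ$ or $\fZ\cap\fH$).

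Next, the core step is to reproduce Lemma 2.3.3 of \citep{nesterov2018lectures}: writing $\vx_{k+1}$ as the minimizer of the strongly convex model $\bar\psi(t,\cdot;\vy_k)$ and invoking first-order optimality on $\fZ$, one derives, for every $\vx\in\fZ$,
\begin{equation*}
\psi(t,\vx_{k+1})\le \bar\psi(t,\vx_{k+1};\vy_k)\le \psi(t,\vx)+L\langle \vy_k-\vx,\vy_k-\vx_{k+1}\rangle-\tfrac{L}{2}\|\vy_k-\vx_{k+1}\|_2^2.
\end{equation*}
This is the exact analogue of the descent lemma for projected gradient on a smooth convex function and is the only place the max-structure has to be handled carefully; the rest of the argument is structural.

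With this inequality in hand, I would plug it into Nesterov's estimating-sequence construction driven by the sequences $\{\alpha_k\}$ and $\{\beta_k\}$ defined in Algorithm \ref{alg: disc-mini}. The recursion $\alpha_{k+1}^2=(1-\alpha_{k+1})\alpha_k^2$ yields $\alpha_k\le 2/(k+2)$, and a straightforward induction (identical to \citep[Theorem 2.2.2]{nesterov2018lectures}) produces
\begin{equation*}
\psi(t,\vx_K)-\psi^*(t)\le \frac{2L\,\|\vx_0-\vx^*_{(t)}\|_2^2}{(K+1)^2}\le \frac{2LD^2}{(K+1)^2},
\end{equation*}
where $\vx^*_{(t)}\in\fZ$ attains $\psi^*(t)$ and the second inequality uses Assumption \ref{asm:D}. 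The left-hand side equals $\hat\psi^*(t)-\psi^*(t)\ge 0$, so substituting $K\ge D\sqrt{12L/\epsilon}$ gives $\hat\psi^*(t)-\psi^*(t)\le \epsilon/2$, which is the claim.

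The main obstacle is purely bookkeeping: verifying that Nesterov's estimating-sequence machinery applies \emph{verbatim} when the smooth objective is replaced by the nonsmooth max $\psi(t,\cdot)$ through the gradient-mapping surrogate. Once the descent inequality above is established, everything else is the standard accelerated analysis, so a complete proof can simply cite \citep[Theorem 2.3.5]{nesterov2018lectures} after pointing out that $\psi(t,\vx)$ is a max of two $L$-smooth convex functions and that the step (\ref{gradient mapping}) is well-defined on $\fZ$ via Proposition \ref{prop: gradient mapping}.
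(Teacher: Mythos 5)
Your proposal is correct and follows essentially the same route as the paper: both reduce the claim to Nesterov's Theorem 2.3.5 for the generalized AGM applied to the max of two $L$-smooth convex functions and then substitute the constants, with you additionally spelling out the gradient-mapping descent inequality that the paper leaves to the citation. One small bookkeeping note: with $\alpha_0=\tfrac12$ and hence $\gamma_0=\tfrac L2$, the rate that comes out of Theorem 2.3.5 is $\tfrac{6LD^2}{(K+1)^2}$ rather than your $\tfrac{2LD^2}{(K+1)^2}$, which is exactly why the threshold is calibrated at $K\ge D\sqrt{12L/\epsilon}$; your (over-optimistic) constant does not affect the conclusion.
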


We refer to Algorithm \ref{alg: smooth} with generalized AGM subroutine  (Algorithm \ref{alg: disc-mini}) as FC-BiO\textsuperscript{\texttt{sm}}, whose total iteration complexity is given by the following theorem:

\begin{thm}[Smooth setting]\label{thm: smooth ub}
Suppose Assumption \ref{asm:D} and \ref{Smooth Problem} hold and $\epsilon_f = \epsilon_g = \epsilon$. When 
\begin{align*}
    T\geq \left\lceil\log_2\frac{u-\ell}{\epsilon/2}\right\rceil\sqrt{\frac {12\max\{L_f,L_g\}}{\epsilon}}D,
\end{align*}
the approximate solution $\hat \vx$ produced by FC-BiO\textsuperscript{\texttt{sm}} is a $(\epsilon,\epsilon)$-weak optimal solution to \probref{Simple BiO}.
\end{thm}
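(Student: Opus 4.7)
}
The plan is to chain together the three ingredients already established in Section \ref{Subsection 5.2} and \ref{Subsection 5.3}: Lemma \ref{lem: bisec process} (correctness of the bisection once each inner approximation is accurate), Lemma \ref{thm: disc-mini} (inner complexity for the generalized AGM), and the properties of the initial interval $[\ell,u]$ together with the approximation $\hat g^*$ obtained as discussed in Appendix \ref{app: initial interval}. First, I would fix $N=\lceil\log_2\frac{u-\ell}{\epsilon/2}\rceil$ and $K=T/N$ as defined in Algorithm \ref{alg: smooth}, and note that the bound on $T$ in the statement is exactly chosen so that
\begin{equation*}
K \;=\; T/N \;\geq\; D\sqrt{\tfrac{12\max\{L_f,L_g\}}{\epsilon}},
\end{equation*}
which is precisely the number of inner iterations required by Lemma \ref{thm: disc-mini}.

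Next, I would verify that in every outer iteration $k=0,\ldots,N-1$, the call $\fM(\bar\vx,D,t,K)$ to Algorithm \ref{alg: disc-mini} returns $\hat\vx_{(t)}$ and $\hat\psi^*(t)=\psi(t,\hat\vx_{(t)})$ satisfying the accuracy condition (\ref{eq: disc-mini}):
\begin{equation*}
\psi^*(t)\;\leq\;\hat\psi^*(t)\;\leq\;\psi^*(t)+\tfrac{\epsilon}{2}.
\end{equation*}
The left inequality is immediate from the definition of $\psi^*(t)$ as a minimum, while the right inequality is exactly the conclusion of Lemma \ref{thm: disc-mini} once $K$ meets the threshold above. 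Here I would also briefly remark that although the warm-start (Appendix \ref{app: warm-start}) changes the initialization $\bar\vx$ of each inner call, it does not weaken the guarantee of Lemma \ref{thm: disc-mini}, since that lemma's bound is uniform in the starting point over the compact domain of diameter $D$.

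Having established that (\ref{eq: disc-mini}) holds at every $t$ queried during the bisection, I would combine this with the fact that the approximate lower-level value $\hat g^*$ satisfies $g^*\leq \hat g^*\leq g^*+\epsilon/2$ (guaranteed by the initial preprocessing described in Appendix \ref{app: initial interval}). Then Lemma \ref{lem: bisec process} directly applies and yields that the returned $\hat\vx=\hat\vx_{(u)}$ is a $(\epsilon,\epsilon/2)$-weak optimal solution to the functionally constrained Problem (\ref{relaxed func-cons}). A final invocation of Lemma \ref{lem: func-cons} upgrades this to a $(\epsilon,\epsilon)$-weak optimal solution of \probref{Simple BiO}, completing the proof.

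The main obstacle is ensuring that all the bookkeeping lines up cleanly: the choice of $N$ must account for the fact that after $N$ bisection steps the interval has length at most $\epsilon/2$, so that whenever the algorithm sets $u=t$ the true smallest root $\hat f^*$ and the current $u$ can differ by no more than $\epsilon/2$ even though the inner estimate $\hat\psi^*(t)$ is only $\epsilon/2$-accurate. This tolerance stacking is exactly what Lemma \ref{lem: bisec process} absorbs, but verifying it explicitly would likely constitute the most delicate step of the argument. All remaining computations are routine.
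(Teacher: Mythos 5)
Your proposal is correct and follows essentially the same route as the paper, whose proof of Theorem \ref{thm: smooth ub} is simply ``combining Lemma \ref{lem: bisec process} and Lemma \ref{thm: disc-mini}''; you have merely spelled out the bookkeeping ($K = T/N$ meeting the threshold of Lemma \ref{thm: disc-mini}, the guarantee on $\hat g^*$ from the preprocessing, and the final appeal to Lemma \ref{lem: func-cons}) that the paper leaves implicit.
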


For more general cases when the desired accuracy for the upper-level and lower-level problems is different (\textit{i.e.} $\epsilon_f\not = \epsilon_g$), we can simply scale the objective functions before applying FC-BiO\textsuperscript{\texttt{Lip}} or FC-BiO\textsuperscript{\texttt{sm}}, resulting in the following guarantee:
\begin{cor}\label{cor: scaling}
Suppose Assumption \ref{asm:D} holds. By scaling $\tilde g^\circ= \frac{\epsilon_f}{\epsilon_g} \tilde g$ and applying FC-BiO\textsuperscript{\texttt{Lip}} or FC-BiO\textsuperscript{\texttt{sm}} on functions $f$ and $\tilde g^\circ$, a 
$(\epsilon_f,\epsilon_g)$-weak optimal solution to \probref{Simple BiO} is obtained within the complexity of 
$\tilde \fO \left(\max\left\{\frac{C_f^2}{\epsilon_f^2},\frac{C_g^2}{\epsilon_g^2}\right\} D^2 \right)$ under Assumption \ref{Lipschitz Problem}  and $ \tilde \fO\left(\max\left\{\sqrt{\frac{L_f}{\epsilon_f}},\sqrt{\frac{L_g}{\epsilon_g}}\right\}D\right)$ under Assumption \ref{Smooth Problem}.
\end{cor}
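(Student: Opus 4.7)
The plan is to reduce the asymmetric-accuracy setting to the symmetric-accuracy setting already handled by Theorem~\ref{thm: lipschitz ub} and Theorem~\ref{thm: smooth ub} via a simple rescaling of the lower-level residual. Since positive scaling preserves the argmin set, the bilevel instance $(f,g)$ and the bilevel instance $(f,g^\circ)$ with $g^\circ = (\epsilon_f/\epsilon_g)\,g$ share the same constrained solution set $\fX_g^*$ and the same optimal value $f^*$; the upper-level objective is untouched. So the idea is to run FC-BiO on $(f,g^\circ)$ with a common target accuracy $\epsilon = \epsilon_f$ and then translate the guarantee back.

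First I would verify the translation of weak optimality. Suppose FC-BiO\textsuperscript{\texttt{Lip}} or FC-BiO\textsuperscript{\texttt{sm}} applied to $(f,g^\circ)$ with symmetric tolerance $\epsilon_f$ returns $\hat \vx$ satisfying $f(\hat \vx)-f^*\le \epsilon_f$ and $\tilde g^\circ(\hat \vx)\le \epsilon_f$. Multiplying the second inequality by $\epsilon_g/\epsilon_f$ yields $\tilde g(\hat \vx)\le \epsilon_g$, i.e.\ $g(\hat \vx)-g^*\le \epsilon_g$. Hence $\hat \vx$ is a $(\epsilon_f,\epsilon_g)$-weak optimal solution of Problem~(\ref{Simple BiO}). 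Note also that to invoke Lemma~\ref{lem: func-cons} on the scaled problem we need an approximation $\hat g_\circ^*$ with $(g^\circ)^*\le \hat g_\circ^*\le (g^\circ)^*+\epsilon_f/2$; this is produced by running a single-level solver on $g$ to get $\hat g^*$ with $g^*\le \hat g^*\le g^*+\epsilon_g/2$ and then scaling, at a cost absorbed into the final $\tilde \fO(\cdot)$ bound (cf.\ Appendix~\ref{app: initial interval}).

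Next I would track how the scaling affects the regularity constants entering the complexity theorems. Under Assumption~\ref{Lipschitz Problem}, $\tilde g^\circ$ is $(\epsilon_f/\epsilon_g)\,C_g$-Lipschitz, so the effective Lipschitz constant in Theorem~\ref{thm: lipschitz ub} becomes $\max\{C_f,(\epsilon_f/\epsilon_g)C_g\}$. Plugging in $\epsilon = \epsilon_f$ gives
\begin{align*}
\tilde \fO\!\left(\frac{\max\{C_f^2,(\epsilon_f/\epsilon_g)^2 C_g^2\}}{\epsilon_f^2}\,D^2\right)
=\tilde \fO\!\left(\max\!\left\{\frac{C_f^2}{\epsilon_f^2},\frac{C_g^2}{\epsilon_g^2}\right\}D^2\right).
\end{align*}
Under Assumption~\ref{Smooth Problem}, $\tilde g^\circ$ is $(\epsilon_f/\epsilon_g)\,L_g$-smooth, so Theorem~\ref{thm: smooth ub} with $\epsilon=\epsilon_f$ yields
\begin{align*}
\tilde \fO\!\left(\sqrt{\frac{\max\{L_f,(\epsilon_f/\epsilon_g)L_g\}}{\epsilon_f}}\,D\right)
=\tilde \fO\!\left(\max\!\left\{\sqrt{\frac{L_f}{\epsilon_f}},\sqrt{\frac{L_g}{\epsilon_g}}\right\}D\right).
\end{align*}

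There is no real obstacle: the main sanity check is that the scaling commutes with both (i)~the bisection on $\psi^*(t)$, where the width of the initial bracket $u-\ell$ only contributes a logarithmic factor already hidden in $\tilde \fO$, and (ii)~the subroutine analyses in Lemma~\ref{lem: GD} and Lemma~\ref{thm: disc-mini}, which depend on $(f,\tilde g)$ solely through their Lipschitz or smoothness constants. Everything else in the proofs of Theorem~\ref{thm: lipschitz ub} and Theorem~\ref{thm: smooth ub} carries over verbatim, completing the argument.
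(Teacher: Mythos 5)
Your proposal is correct and follows essentially the same route as the paper's own proof: rescale $\tilde g$ by $\epsilon_f/\epsilon_g$, observe that a $(\epsilon_f,\epsilon_f)$-weak optimal solution of the scaled problem translates back to a $(\epsilon_f,\epsilon_g)$-weak optimal solution of the original, and plug the scaled Lipschitz/smoothness constants into Theorems~\ref{thm: lipschitz ub} and~\ref{thm: smooth ub}. The extra bookkeeping you include (scaling $\hat g^*$ and absorbing the single-level preprocessing cost) is consistent with what the paper defers to Appendix~\ref{app: initial interval}.
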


Our proposed algorithms are near-optimal in both Lipschitz and smooth settings as the convergence results in Corollary \ref{cor: scaling} match the lower bounds established in Theorem \ref{thm: smooth lb} and Theorem \ref{thm: Lipschitz lb}.

\section{Numerical experiments}\label{Numerical Experiments}

In this section, we evaluate our proposed methods on two different bilevel problems with smooth objectives. We compare the performance of FC-BiO\textsuperscript{\texttt{sm}} with existing methods, including a-IRG \citep{kaushik2021method}, Bi-SG \citep{merchav2023convex}, CG-BiO\citep{jiang2023conditional}, AGM-BiO \citep{cao2024accelerated}, PB-APG\citep{chen2024penalty}, and Bisec-BiO\citep{wang2024near}. The following problems are also Lipschitz on a compact set $\fZ$, so we implement FC-BiO\textsuperscript{\texttt{Lip}} as well. The initialization time of FC-BiO\textsuperscript{\texttt{sm}}, FC-BiO\textsuperscript{\texttt{Lip}}, CG-BiO, and Bisec-BiO is taken into account and is plotted in the figures.  Our implementations of CG-BiO and a-IRG are based on the codes from
\citep{jiang2023conditional}, which is available online\footnote{\url{https://github.com/Raymond30/CG-BiO}}.

\subsection{Minimum norm solution}

As in \citep{wang2024near}, we consider the following simple bilevel problem:
\begin{align} \label{eq:MNP}
    f(\vx) =\frac 12 \|\vx\|_2^2, \quad g(\vx) = \frac 12 \|A\vx-b\|_2^2.
\end{align}
We set the feasible set $\fZ = \mathcal B(\mathbf0, 2)$. We use the Wikipedia Math Essential dataset \citep{rozemberczki2021pytorch}, which contains $1068$ instances with $730$ attributes. We uniformly sample $400$ instances and denote the feature matrix and outcome vector by $A$ and $\vb$ respectively. We choose the same random initial point $\vx_0$ for all methods. We set $\epsilon_f=\epsilon_g = 10^{-6}$. 
For this problem, we can explicitly solve 
$\vx^*$ and $f^*$ to measure the convergence.  
Figure \ref{fig: MNP} shows the superior performance of our method compared to existing methods in both upper-level and lower-level.
The only exception is
Bisec-BiO \citep{wang2024near}, which shows a comparable performance to our method.
This also aligns well with the theory as these two methods have the same convergence rates. We remark that the output of our method satisfies that $f(\hat \vx)<f^*$. Thus although $|f(\hat \vx)- f ^*|>\epsilon_f$, indeed a $(\epsilon_f,\epsilon_g)$-weak optimal solution is solved by FC-BiO\textsuperscript{\texttt{sm}}. See more experiment details in Appendix \ref{app: MNP}.

\begin{figure}
    \centering
    \includegraphics[width=0.80\textwidth]{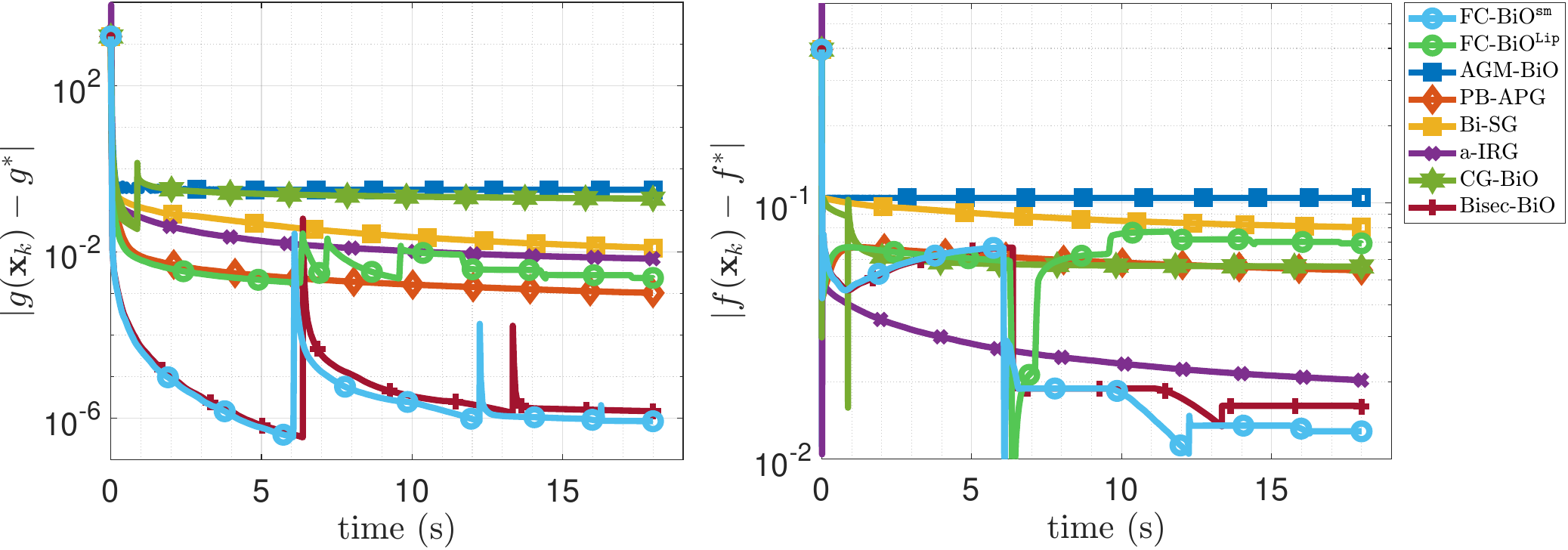}
    \caption{The performance of Algorithm \ref{alg: smooth} compared with other methods in Problem (\ref{eq:MNP}).}
    \label{fig: MNP}
\end{figure}
\begin{figure}
    \centering
    \includegraphics[width=0.80\textwidth]{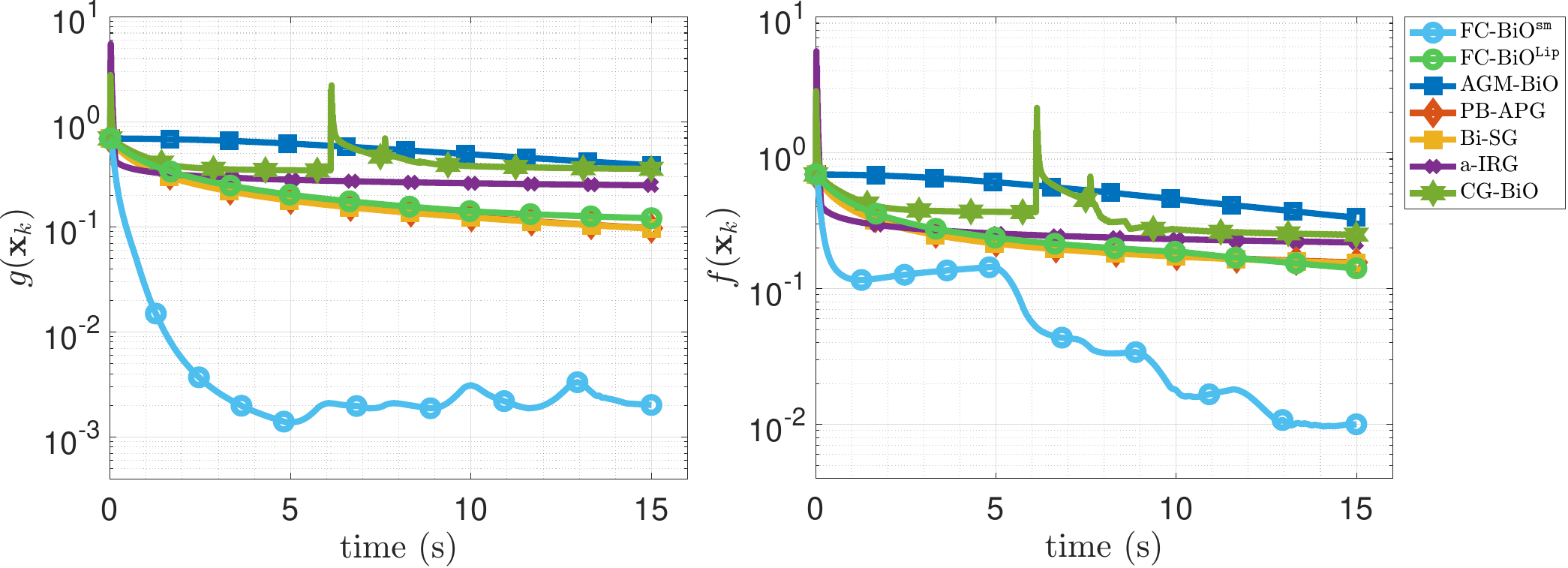}
    \caption{The performance of Algorithm \ref{alg: smooth} compared with other methods in Problem (\ref{exp: OPR})}
    \label{fig: OPR}
\end{figure}

\subsection{Over-parameterized logistic regression}

We examine simple bilevel problems where the lower-level and upper-level objectives correspond to the training loss and validation loss respectively. Here we address the logistic regression problem using the ``rcv1.binary'' dataset from ``LIBSVM'' \citep{lewis2004rcv1, CC01a}, which contains $20,242$ instances with $47,236$ features. We uniformly sample $m=5000$ instances as the training dataset $(A^{tr},\vb^{tr})$, and $m$ instances as the validation dataset $(A^{val},\vb^{val})$. 
We consider the bilevel problem with:
\begin{equation}\label{exp: OPR}
\begin{aligned}
    f(\mathbf x) &= \frac 1 {m}\sum_{i=1}^{m}\log (1+\exp(-(A^{val}_i)^\top\vx \vb^{val}_i)), \\
    g(\vx) &= \frac 1 {m}\sum_{i=1}^{m}\log (1+\exp(-(A^{tr}_i)^\top\vx \vb^{tr}_i)).
\end{aligned}
\end{equation}

We set the feasible set $\fZ = \mathcal B(\mathbf0, 300)$. We set the initial point $\vx_0 = \mathbf 0$ for all methods. We set $\epsilon_f=\epsilon_g = 10^{-3}$. Since projecting to the sublevel set of $f(\vx)$ is not practical, Bisec-BiO \citep{wang2024near} does not apply to this problem, thus we only consider other methods. To the best of our knowledge, no existing solver could obtain the exact value of $f^*$ and $g^*$ of Problem (\ref{exp: OPR}). Thus we only plot function value $f(\vx_k)$ and $g(\vx_k)$, instead of suboptimality.  As shown in Figure \ref{fig: OPR}, our method converges faster than other algorithms in both upper-level and lower-level. More details are provided in Appendix~\ref{app: OPR}.

\section{Conclusion and future work}

This paper provides a comprehensive study of convex simple bilevel problems. We show that finding a $(\epsilon_f,\epsilon_g)$-absolute optimal solution for such problems is intractable for any zero-respecting first-order algorithm, thus justifying the notion of weak optimal solution considered by existing works. 
We then propose a novel method FC-BiO for finding a $(\epsilon_f,\epsilon_g)$-weak optimal solution.
Our proposed method achieves the near-optmal rates of $\tilde\fO \left( \max \{L_f^2/ \epsilon_f^2, L_g^2/ \epsilon_g^2 \} \right)$ and $\tilde \fO \left( \max\{\sqrt{L_f/\epsilon_f}, \sqrt{L_g/\epsilon_g}\}\right)$ for Lipschitz and smooth problems respectively. To the best of our knowledge, this is the first near-optimal algorithm that works under standard assumptions of smoothness or Lipschitz continuity for the objective functions.

We discuss some limitations unaddressed in this work. First, our method introduces an additional logarithmic factor compared to the lower bounds. We hope future works can further close this gap between upper and lower bounds. 
Second, our methods cannot be directly applied to stochastic problems~\citep{cao2023projection}. Establishing lower complexity bounds and developing optimal methods for stochastic simple bilevel problems remain an open question for future research. 

\section*{Acknowledgement}

We would like to express our sincere gratitude to the anonymous reviewers and the area chair for their invaluable feedback and insightful suggestions. In particular, we thank the area chair for suggesting simplifying the proofs of Theorem 5.1 and Theorem 5.2.

\newpage

\bibliographystyle{plainnat}
\bibliography{reference}

\newpage
\appendix
\section{Proofs for Section \ref {Section 4}}\label{app: proof for Sec 4}

\subsection{Constructions of first-order zero-chains}

We first give the constructions of the zero-chains that are used in our negative results. 
These zero-chains are the worst-case functions that \citet{nesterov2018lectures} used to prove the lower complexity bounds for first-order methods in single-level optimization.

\begin{prop}[Paraphrased from Section 2.1.2 \citet{nesterov2018lectures}]
\label{smooth ZC}
    Consider the family of functions $h_{q,L,R}(\vx ): \BR ^q \to \BR$:
\begin{equation}
\begin{aligned}
h_{q,L,R}(\vx)  = \frac{L}{4}\left(\frac 1 {2}\left(\vx_{[1]}^2 + \sum_{i=1}^{q-1}(\vx_{[i]}-\vx_{[i+1]})^2+\vx_{[q]}^2\right)-R\vx_{[1]}\right)
\end{aligned}
\end{equation}
The following properties hold for any $h_{q,L,R}(\mathbf x)$ with $q\in \BN ^+$ and $L,R> 0$:
\begin{enumerate}
\item It is a convex and $L$-smooth function.
\item It has the following unique minimizer: $\vx^*_{[j]}=R\left(1-\frac{j}{q+1}\right)$ with norm $\|\vx^*\|_2\leq R\sqrt q$.
\item 
It is a differentiable first-order zero-chain as defined in Definition \ref{def: ZC}.
\end{enumerate}
\end{prop}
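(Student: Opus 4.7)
The proof naturally splits along the three claimed properties, and the key idea is to rewrite $h_{q,L,R}$ in matrix form. Specifically, I would introduce the tridiagonal matrix $A \in \BR^{q \times q}$ with diagonal entries $2$ and off-diagonals $-1$, so that
\[
h_{q,L,R}(\vx) = \frac{L}{8}\, \vx^\top A\, \vx - \frac{LR}{4}\, \ve_1^\top \vx,
\]
after the routine expansion of the sum-of-squares in the definition. All three parts then reduce to properties of this $A$.

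For convexity and $L$-smoothness (part 1), I would argue $\vzero \preceq A \preceq 4 I$. Positive semi-definiteness is immediate from the sum-of-squares representation in the definition of $h_{q,L,R}$. The upper bound $A \preceq 4 I$ can be seen either through the closed-form eigenvalues $\lambda_k = 4 \sin^2(k\pi / (2(q+1)))$, or more elementarily using $(\vx_{[i]} - \vx_{[i+1]})^2 \le 2 \vx_{[i]}^2 + 2 \vx_{[i+1]}^2$ together with the two boundary terms. Since $\nabla^2 h_{q,L,R} = \frac{L}{4} A$, this delivers both convexity and $L$-smoothness simultaneously.

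For part 2, setting $\nabla h_{q,L,R}(\vx) = 0$ yields the linear system $A \vx = R\, \ve_1$. Because $A$ is strictly positive definite, the minimizer is unique, and I would verify by direct substitution that $\vx^*_{[j]} = R(1 - j/(q+1))$ satisfies all three types of equations (the two boundary equations at $j = 1, q$ and the bulk recurrence for $1 < j < q$); each reduces to a short arithmetic identity. The norm bound follows from $\|\vx^*\|_2^2 = R^2 \sum_{j=1}^q (1 - j/(q+1))^2 \le q R^2$ since every summand lies in $[0,1)$.

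Part 3 (the zero-chain property) is the structural heart and the only step that requires real care. The key observation is that by tridiagonality of $A$, the $j$-th partial derivative $[\nabla h_{q,L,R}(\vx)]_j$ depends only on $\vx_{[j-1]}, \vx_{[j]}, \vx_{[j+1]}$, and on $R$ only when $j = 1$. From this I would run an induction on $k$: the base case uses $\nabla h_{q,L,R}(\vzero) = -\frac{LR}{4}\, \ve_1$, whose support is exactly $\{1\}$; in the inductive step, whenever ${\rm supp}(\vx_s) \subseteq \{1,\ldots, s\}$ for all $s \le k$, tridiagonality gives ${\rm supp}(\nabla h_{q,L,R}(\vx_s)) \subseteq \{1,\ldots, s+1\}$, so the zero-respecting rule propagates this to ${\rm supp}(\vx_{k+1}) \subseteq \{1,\ldots, k+1\}$. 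Hence $\vx_{k,[j]} = 0$ for $k+1 \le j \le q$ as claimed, which is exactly the zero-chain property.
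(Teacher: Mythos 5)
Your proof is correct, and it is genuinely more self-contained than what the paper does. The paper offers no direct proof of this proposition: it cites Nesterov's Section 2.1.2 for the normalized case $h_{q,L,1}$ and then handles general $R$ via the rescaling identity $h_{q,L,R}(\vx)=R^2\,h_{q,L,1}(\vx/R)$, observing that $\nabla^2 h_{q,L,R}(\vx)=\nabla^2 h_{q,L,1}(\vx/R)$ preserves convexity and $L$-smoothness, that the minimizer and its norm scale by $R$, and (implicitly) that $\nabla h_{q,L,R}(\vx)=R\,\nabla h_{q,L,1}(\vx/R)$ has the same support pattern, so the zero-chain property survives. You instead verify everything from scratch through the tridiagonal matrix $A$: the bound $\vzero\preceq A\preceq 4I$ (your eigenvalue formula $4\sin^2(k\pi/(2(q+1)))$ and the elementary AM--GM route are both valid), the direct check that $A\vx^*=R\ve_1$ at $\vx^*_{[j]}=R(1-j/(q+1))$, and the induction exploiting that $[\nabla h]_j$ depends only on coordinates $j-1,j,j+1$ with the affine term confined to $j=1$. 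The underlying mathematics is of course the same as Nesterov's; what your version buys is a standalone argument that does not require the reader to consult the reference or to trust that all three properties transport correctly under the rescaling, at the cost of a somewhat longer write-up.
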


\begin{remark}
In Section 2.1.2 of \citet{nesterov2018lectures}, it is shown that $h_{q,L,1}$ is a convex and $L$-smooth zero-chain. Here we define $h_{q,L,R}(\vx):=R^2 h_{q,L,1}(\frac \vx R)$. Then $\nabla ^2 h_{q,L,R}(\vx)= \nabla ^2 h_{q,L,1}(\frac \vx R)$, and $h_{q,L,R}(\vx)$ is also a convex and $L$-smooth zero-chain. Similarly, in Proposition \ref{Lipschitz ZC}, we define $r_{q,C,R}(\vx) := R r_{q,C,1}(\frac{\vx}{R})$. \citet{nesterov2018lectures} showed $r_{q,C,1}$ is a $C$-Lipschitz zero-chain, which implies that $r_{q,C,R}$ is also a $C$-Lipschitz zero-chain, since $\nabla r_{q,C,R}(\vx) = \nabla r_{q,C,R}(\frac \vx R)$.
\end{remark}

\begin{prop}[Paraphrased from Section 3.2.1 \citet{nesterov2018lectures}]
\label{Lipschitz ZC}
    Consider the family of functions $r_{q,L,R}(\vx ): \BR ^q \to \BR$:
\begin{equation}
\begin{aligned}
r_{q,C,R}(\vx) = \frac{C\sqrt q}{1+\sqrt q }\max_{1\leq j\leq q}\vx _{[j]}+\frac  C{2R(1+\sqrt q )}\|\vx \|_2^2.
\end{aligned}
\end{equation}
The following properties hold for any $r_{q,L,R}(\mathbf x)$ with $q\in \BN ^+$ and $C,R>0$:
\begin{enumerate}
\item 
It is a convex function and is $C$-Lipschitz in the Euclidean ball $\mathcal B (\mathbf 0 ,R)$. 
\item 
It has a unique minimizer $\vx^*=-\frac {R}{\sqrt q }\mathbf 1$.
\item 
It is a non-differentiable first-order zero-chain.
\end{enumerate}
\end{prop}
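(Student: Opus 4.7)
The plan is to verify the three claims of Proposition~\ref{Lipschitz ZC} separately, each by a short direct argument tailored to the explicit form of $r_{q,C,R}$.

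For convexity and the Lipschitz bound, I would first note that $r_{q,C,R}$ is a sum of a maximum of linear functions and a positive multiple of $\|\vx\|_2^2$, so convexity is automatic. To get the Lipschitz constant on $\mathcal B(\mathbf 0, R)$, I would bound the subgradients of the two pieces separately. Any subgradient of $\max_{1 \le j \le q} \vx_{[j]}$ at $\vx$ is a convex combination of basis vectors $e_j$ with $j \in \arg\max_i \vx_{[i]}$, hence has Euclidean norm at most $1$; this contributes at most $\frac{C\sqrt q}{1+\sqrt q}$. The gradient of the quadratic part at $\vx \in \mathcal B(\mathbf 0, R)$ has norm $\frac{C}{R(1+\sqrt q)} \|\vx\|_2 \le \frac{C}{1+\sqrt q}$. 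Summing the two bounds by the triangle inequality gives exactly $C$.

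For the unique minimizer, I would verify the first-order optimality condition $\mathbf 0 \in \partial r_{q,C,R}(\vx^*)$ at $\vx^* = -\frac{R}{\sqrt q}\mathbf 1$. At this point all coordinates coincide, so $\arg\max = [q]$ and the convex combination $\frac{1}{q}\mathbf 1$ is a valid subgradient of the max term. Plugging in, the max-part contribution $\frac{C\sqrt q}{(1+\sqrt q)}\cdot\frac{1}{q}\mathbf 1$ cancels the gradient $\frac{C}{R(1+\sqrt q)}\cdot(-\frac{R}{\sqrt q})\mathbf 1$ of the quadratic part. Strict convexity of the quadratic piece yields strict convexity of $r_{q,C,R}$, which gives uniqueness.

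For the zero-chain property, I would prove by induction that the iterates can be forced to satisfy $\text{supp}(\vx_k) \subseteq \{1, \dots, k\}$ by an adversarial choice of subgradients. The base case $\vx_0 = \mathbf 0$ is immediate. Given that $\vx_0, \dots, \vx_k$ all respect this hypothesis, I need to select a subgradient at $\vx_k$ whose support lies in $\{1, \dots, k+1\}$. The quadratic part contributes $\frac{C}{R(1+\sqrt q)} \vx_k$, already supported in $\{1, \dots, k\}$. For the max part I split into two cases: if $\max_{1 \le j \le k} \vx_{k,[j]} \le 0$, then $\vx_{k,[k+1]} = 0$ belongs to the argmax and I may pick $e_{k+1}$; otherwise the argmax is contained in $\{1, \dots, k\}$ and any valid choice already lies there. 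Either way the combined subgradient has support in $\{1, \dots, k+1\}$, so the zero-respecting condition forces $\vx_{k+1}$ to have support in $\{1, \dots, k+1\}$.

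The main subtlety I expect to navigate is ensuring that the adversarial subgradient is \emph{genuinely} a valid element of $\partial r_{q,C,R}(\vx_k)$ at every step --- in particular, that we never need a coordinate $e_j$ with $j \notin \arg\max$. The case split in the third claim is designed precisely to handle this: the only way $e_{k+1}$ could fail to be valid is if some previously activated coordinate is already strictly positive, but in that situation the argmax is automatically confined to coordinates the support already contains, so no new index is required.
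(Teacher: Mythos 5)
Your proof is correct, but it takes a different (and more self-contained) route than the paper. The paper does not verify the three properties directly: it invokes Nesterov's Section 3.2.1, where the analogous statement is proved for the normalized function $r_{q,C,1}$, and then observes in a remark that $r_{q,C,R}(\vx) = R\, r_{q,C,1}(\vx/R)$, so that convexity, the Lipschitz constant, and the zero-chain property transfer under this rescaling; the paper also explicitly flags that its Lipschitz claim is on $\mathcal B(\mathbf 0,R)$ rather than on the ball $\mathcal B(\vx^*,R)$ used by Nesterov, without spelling out the verification. Your direct computation is precisely what substantiates that modified claim: bounding the subgradient of the max-term by $1$ and the gradient of the quadratic by $\frac{C}{R(1+\sqrt q)}\|\vx\|_2 \le \frac{C}{1+\sqrt q}$ on $\mathcal B(\mathbf 0,R)$ sums to exactly $C$, the optimality check $\mathbf 0 \in \partial r_{q,C,R}(\vx^*)$ with the subgradient $\frac 1q \mathbf 1$ of the max-term is exact, and your two-case adversarial subgradient selection (take $e_{k+1}$ when the running maximum is $\le 0$, otherwise any argmax index, which necessarily lies in $\{1,\dots,k\}$) is the standard resisting-oracle argument and matches the "possibly adversarial subgradient" convention in the paper's Definition 3.1. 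In short, the paper's approach buys brevity by outsourcing to a known result plus a scaling identity, while yours buys a complete verification that in particular justifies the shift of the Lipschitz ball from $\vx^*$ to the origin, a point the paper only asserts.
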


\begin{remark}
    The non-differentiable first-order zero-chain presented in Proposition \ref{Lipschitz ZC} is slightly different from the one given in \citet{nesterov2018lectures}. $r_{q,C,R}(\cdot)$ is Lipschitz in $\mathcal B(\mathbf 0,R)$, while the zero-chain presented in \citet{nesterov2018lectures} is Lipschitz in $\mathcal B(\mathbf \vx^*,R)$, where $\vx^*$ is the minimizer of the zero-chain. 
\end{remark}

\subsection{Proofs of Theorem \ref{thm: smooth abs hard} and Theorem \ref{thm: Lipschitz abs hard}}

\begin{proof}[Proof of Theorem \ref{thm: smooth abs hard}.]
Consider any first-order algorithm $\mathcal A$ satisfying Assumption \ref{algorithm class} that runs for $T$ iterations. Without loss of generality, we assume the initial point of $\mathcal A$ is $\vx_0 = \mathbf 0$. Otherwise, we can translate the following construction to $f(\vx-\vx_0),g(\vx-\vx_0)$. Let $q =2T$ and 
\begin{align*}
    f(\vx) = \frac{1}{2}\sum_{j={T+1}}^q  \vx_{[j]}^2 , \quad g(\vx) =  h_{q,1,1/ {\sqrt q }}(\vx), 
\end{align*}
where $h_{q,1,1/\sqrt q }(\cdot)$ follows Proposition \ref{smooth ZC}. 
It is clear from the construction that both $f(\cdot), g(\cdot )$ are convex and $1$-smooth. Furthermore, the optimal solution to \probref{Simple BiO} defined by such $f, g$ is the unique minimizer of $g(\vx)$, given by $\vx_{[j]}^*=\frac 1 {\sqrt q }(1-\frac{j}{q+1})$. We prove by induction on $k$ that the test points $\{\vx_k\}_{k=0}^T$ generated by $\mathcal A$ satisfies $\vx_{k,[j]}=0$ for $0\leq k\leq T$, $T+1\leq j \leq 2T$: Suppose for some $k\leq T$, $\vx_{i,[j]}=0$ holds for $0\leq i\leq k-1 , T+1\leq j\leq 2T$, we have
\begin{align*}
\nabla f(\vx_i)=0,\quad 0\leq i \leq k-1.
\end{align*}
The zero-respecting assumption of $\mathcal A$ (Assumption \ref{algorithm class}) leads to
\begin{align*}
\vx_{k}&\in \bigcup_{0\leq i\leq k-1}\mathrm{supp}(\nabla f(\vx_i))\cup\mathrm{supp}(\nabla g(\vx_i))\\
&=\bigcup_{0\leq i\leq k-1}\mathrm{supp}(\nabla g(\vx_i)).
\end{align*}
Since $g(\vx)$ is a first-order zero-chain, we conclude that
\begin{align*}
\vx_{k,[j]}=0,\quad k+1\leq j\leq 2T.
\end{align*}

Consequently, $f(\vx_k)$ remains zero for all $0 \le k \le K$. However,
\begin{align*}
    f^*&=\frac 1 2 \sum_{j=T+1}^{2T}\vx_{[j]}^{*2}\\
    &=\frac 1 {4T} \sum_{j=T+1}^{2T}\left(1-\frac j {2T+1}\right)^2\\
    &=\frac 1 {24}\cdot \frac{T+1}{2T+1}\\
    &\geq \frac 1{48}.
\end{align*}
Thus
\begin{align*}
|f(\vx_k)-f^*|\geq \frac 1 {48}, \quad 1\leq k\leq T.
\end{align*}

\end{proof}

\begin{proof}[Proof of Theorem \ref{thm: Lipschitz abs hard}.]
Consider any first-order algorithm $\mathcal A$ satisfying Assumption \ref{algorithm class} that runs for $T$ iterations. We similarly assume that the initial point of $\mathcal A$ is $\vx_0 = 0$. Let $q=2T$ and
\begin{align*}
f(\vx)=\frac 1 2\sum_{j=T+1}^{2T}\vx_{[j]}^2,\; g(\vx)=r_{2T,1,1}(\vx),
\end{align*}
where $r_{2T,1,1}(\cdot)$ follows the construction in Proposition \ref{Lipschitz ZC}. Both $g(\cdot)$ and $f(\cdot )$ are convex and $1$-Lipschitz in $\mathcal B (\mathbf 0, 1)$, and the unique minimizer $\vx^*  = -\frac 1 {\sqrt q}\mathbf 1$ of $g$ is the optimal solution to \probref{Simple BiO} defined by such $f$ and $g$, with norm $\|\vx^*\|_2=1$. Similar to the proof of Theorem \ref{thm: smooth abs hard}, we prove by induction on $k$ that there exist some adversarial subgradients $\{\partial g(\vx_0),\cdots,\partial g(\vx_{k-1})\}$ such that the test points $\{\vx_k\}_{k=0}^T$ generated by $\mathcal A$ satisfies $\vx_{k,[j]}=0$ for $0\leq k\leq T, T+1\leq j\leq 2T$:  Suppose for some $k\leq T$, $\vx_{i,[j]}=0$ holds for $0\leq i\leq k-1 , T+1\leq j\leq 2T$, we have
\begin{align*}
\nabla f(\vx_i)=0,\quad 0\leq i \leq k-1.
\end{align*}
The zero-respecting assumption of $\mathcal A$ (Assumption \ref{algorithm class}) leads to
\begin{align*}
\vx_{k}&\in \bigcup_{0\leq i\leq k-1}\mathrm{supp}(\nabla f(\vx_i))\cup\mathrm{supp}(\partial g(\vx_i))\\
&=\bigcup_{0\leq i\leq k-1}\mathrm{supp}(\partial g(\vx_i)),
\end{align*}
where $\{\partial g(\vx_0),\cdots,\partial g(\vx_{k-1})\}$ are the subgradients returned by a black-box first-order oracle. Since $g(\vx)$ is a first-order zero-chain, we conclude that there exists some adversarial subgradients $\{\partial g(\vx_0),\cdots,\partial g(\vx_{k-1})\}$ such that
\begin{align*}
\vx_{k,[j]}=0,\quad k+1\leq j\leq 2T.
\end{align*}
Consequently, $f(\vx_k)$ remains zero for all $0 \le k \le K$. However, $f^*=f(\vx^*)=\frac 1 4$.
Thus
\begin{align*}
|f(\vx_k)-f^*|\geq \frac 1 4, \quad 1\leq k\leq T.
\end{align*}
\end{proof}

\section{Algorithm details}
\label{app: alg detalis}

\subsection{{Determining $\hat g ^*$ and the initial interval}}\label{app: initial interval}

To apply the two-step reformulation, we need to solve an approximate value of the lower-level problem $\hat g ^*$ such that $g^*\leq \hat g ^*\leq g^*+\frac {\epsilon} 2$.
Furthermore, to conduct the bisection method of FC-BiO, it is necessary to first determine an initial interval $[\ell,u]$ that contains the minimal root of $\psi^*(t)$, which is exactly $\hat f ^*$ (Lemma \ref{lem: reformulation}). The following procedure is inspired by \citet{wang2024near}:
First, we apply optimal first-order methods for single-level optimization problems -- subgradient method (SGM) \citep[Section 3.1]{bubeck2015convex} for Lipschitz functions, and accelerated gradient method (AGM) \citep[Section 2.2]{nesterov2018lectures} for smooth functions --  to the lower-level objective $g$ to obtain an approximate minimum point $\hat \vx_g\in \BR ^n$ such that $g^*\leq  g(\hat \vx_g)\leq g^*+\frac \epsilon 2$. We set $\hat g^*=g(\hat \vx_g)$  and define the relaxed constraint function $\tilde g(\vx)$ as in (\ref{relaxed func-cons}). We set $u=f(\hat \vx_g)$, then 
\begin{align*}
\psi^*(u)\leq \psi(u,\hat \vx_g)=\max\{f(\hat \vx_g)-u,\tilde g(\hat \vx_g)\}=0.
\end{align*}
Thus $u$ is indeed an upper-bound of the minimal root of $\psi^*(\cdot)$ given that $\psi^*(\cdot)$ is decreasing.

To derive a lower bound of the minimal root $\hat f ^*$, we can also apply the optimal single-level minimization methods to the upper-level objective $f$ to find an approximate global minimum point $\hat \vx_f\in \fZ$ such that $p^*\leq \hat p ^*\triangleq f(\hat \vx_f)\leq p^*+\frac \epsilon 2$, where $p^*$ is the minimum value of $f$ over $\fZ$  (\textit{i.e.}, $p^*\triangleq \min_{\vx \in \fZ} f(\vx)$). Then $\ell = \hat p ^*-\frac \epsilon 2$ is a valid lower bound of $\hat f ^*$ since 
\begin{align*}
\ell = \hat p ^*-\frac \epsilon2\leq p^*\leq \hat f^*.
\end{align*}
Nevertheless, any lower bound for $\hat f^*$ is acceptable, such as $0$ when the upper-level objective $f$ is non-negative.

The complexity of applying the optimal single-level minimization methods to $f$ and $g$ separately is $\fO\left(\frac{C_f^2}{\epsilon^2}D^2+\frac{C_g^2}{\epsilon^2}D^2\right)$ (SGM for Lipschitz problems) or $\fO\left(\sqrt{\frac{L_f}{\epsilon}}D+\sqrt{\frac{L_g}{\epsilon}}D\right)$ (AGM for smooth problems), which does not increase the total complexity established in Theorem \ref{thm: lipschitz ub} and \ref{thm: smooth ub}.

\subsection{Warm-start strategy}\label{app: warm-start}
For the initial point of the subroutine $\fM$ (SGM in FC-BiO\textsuperscript{\texttt{Lip}} or generalized AGM in FC-BiO\textsuperscript{\texttt{Sm}} ), we exploit the \textit{warm-start} strategy, which uses the last point in the previous round (denoted by $\bar \vx $ in Algorithm \ref{alg: smooth}) as the initial point of the current round. Intuitively, the subproblem (\textit{i.e.} minimizing $\max\{f(\vx)-t,g(\vx)\}$) does not change significantly in each round since the parameter $t$ does not change too much as $\ell$ and $u$ are getting closer. Thus the approximate solution of the last round is supposed to be close to the optimal solution of the current round. Nevertheless, any initial point in $\fZ$ does not change the complexity upper bound. Such a strategy is widely used in two-level optimization methods \citep{arbel2022amortized,lin2018catalyst,chen2024finding}. 

Similarly, we can use the approximate minimum point $\hat \vx_g$ of $g$ (as described in Appendix \ref{app: initial interval}) instead of $\vx_0$ as the initial point of the first round of the subroutine.

\section{Proofs for Section \ref{Section 5}}
\subsection{Proofs of Theorem \ref{thm: smooth lb} and Theorem \ref{thm: Lipschitz lb}}\label{app: proof of lbs}

Recall the lower complexity bounds for single-level convex optimization problems:

\begin{lem}[{\citet[Theorem 2.1.7]{nesterov2018lectures}}]\label {lem: single-level smooth lb}
Given $L>0,D>0$. For any first-order algorithm $\mathcal A$ satisfying Assumption \ref{algorithm class} and any initial point $\vx_0$, there exists a convex and $L$-smooth function $f$ such that the minimizer $\vx^*$ satisfies $\|\vx^*-\vx_0\|_2\leq D$ and 
\begin{align*}
f(\vx_T)-f(\vx^*)\geq \frac{3LD^2}{32(T+1)^2},
\end{align*}
\end{lem}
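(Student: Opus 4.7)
The plan is to produce an explicit hard instance via the smooth zero-chain family $h_{q,L,R}$ supplied by Proposition \ref{smooth ZC}. Without loss of generality, assume $\vx_0 = \mathbf{0}$; otherwise translate the construction by $\vx_0$. Take $f = h_{q,L,R}$ with $q = 2T+1$, and at the end choose the amplitude $R$ so that the minimizer $\vx^*$ lies on the boundary of $\mathcal{B}(\vx_0, D)$.

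First I would argue by induction on $k$ that every iterate satisfies $\vx_{k,[j]} = 0$ for all $j > k$. The base case holds since $\vx_0 = \mathbf{0}$; the inductive step combines Assumption \ref{algorithm class} with the fact that $h_{q,L,R}$ is a differentiable zero-chain (Proposition \ref{smooth ZC}, item~3), so each gradient at a point supported on the first $k$ coordinates can only unlock the $(k+1)$-th coordinate. In particular $\vx_T$ lies in the subspace $V_T = \{\vx : \vx_{[j]} = 0,\; j > T\}$, which yields
\begin{equation*}
f(\vx_T) \geq \min_{\vx \in V_T} h_{q,L,R}(\vx).
\end{equation*}

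Next I would observe that the restriction of $h_{q,L,R}$ to $V_T$ is exactly the lower-dimensional chain $h_{T,L,R}$, which is immediate from the explicit polynomial form in Proposition \ref{smooth ZC}. Using the closed-form minimum $\min h_{k,L,R} = -\frac{LR^2}{8}\bigl(1 - \frac{1}{k+1}\bigr)$ (a direct computation from the minimizer formula $\vx^*_{[j]} = R(1 - j/(k+1))$), subtracting the two minima gives
\begin{equation*}
f(\vx_T) - f^* \geq \frac{LR^2}{8}\left(\frac{1}{T+1} - \frac{1}{q+1}\right) \geq \frac{LR^2}{16(T+1)},
\end{equation*}
where the last step uses $q = 2T+1$.

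Finally I would tune $R$ via the constraint $\|\vx^*\|_2 \leq D$. The closed-form minimizer yields $\|\vx^*\|_2^2 = R^2 q(2q+1)/(6(q+1))$, so picking $R^2 = 6D^2(q+1)/(q(2q+1))$ saturates the ball constraint. Plugging this into the previous display and simplifying with $q = 2T+1$ produces the claimed $\frac{3LD^2}{32(T+1)^2}$ bound after a routine algebraic manipulation. The main difficulty is not conceptual -- the zero-chain property plus the two closed-form minima do all the heavy lifting -- but purely arithmetic: extracting the exact constant $3/32$ requires careful bookkeeping of the normalization $\|\vx^*\|_2^2$ together with the specific value of $q$.
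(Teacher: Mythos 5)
Your proof is correct and is exactly the standard argument behind the cited result: the paper itself does not reprove this lemma (it invokes Nesterov's Theorem 2.1.7 and merely remarks that the hard instance is the zero-chain of Proposition \ref{smooth ZC}), and your reconstruction — zero-chain confinement to $V_T$, identification of the restriction with $h_{T,L,R}$, the two closed-form minima, and the normalization $R^2 = 6D^2(q+1)/(q(2q+1))$ with $q=2T+1$ — is precisely that proof, with the final constant following from $(2T+1)(4T+3)\le 8(T+1)^2$. As a minor bonus, your choice of $R$ is the correct one to enforce $\|\vx^*-\vx_0\|_2\le D$, whereas the paper's offhand identification of the hard function as $h_{2T+1,L,D}$ (i.e.\ $R=D$) would place the minimizer at distance roughly $D\sqrt{q/3}$.
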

\begin{lem}[{\citet[Theorem 3.2.1]{nesterov2018lectures}}]\label {lem: single-level Lipschitz lb}
Given $C>0,D>0$. For any first-order algorithm $\mathcal A$ satisfying Assumption \ref{algorithm class} and any initial point $\vx_0$, there exists a convex function $f$ that is $C$-Lipschitz in $\mathcal B (\vx_0,D)$, such that the minimizer $\vx^*$ satisfies $\|\vx^*-\vx_0\|_2\leq D$ and 
\begin{align*}
f(\vx_T)-f(\vx^*)\geq \frac{CD}{2(1+\sqrt{T})},
\end{align*}
\end{lem}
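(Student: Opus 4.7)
The proof follows Nesterov's classical construction of a worst-case nonsmooth convex objective, and my plan is to simply instantiate the zero-chain $r_{q,C,D}$ already introduced in Proposition \ref{Lipschitz ZC} as the hard instance. Without loss of generality translate so that $\vx_0 = \mathbf{0}$, and choose the ambient dimension $q = T+1$. Proposition \ref{Lipschitz ZC} then gives at once that $f := r_{q,C,D}$ is convex, $C$-Lipschitz on $\mathcal B(\mathbf{0}, D)$, has its unique minimizer $\vx^* = -(D/\sqrt{q})\mathbf{1}$ inside $\mathcal B(\mathbf{0}, D)$, and is a non-differentiable first-order zero-chain.

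The next step is to run the same zero-respecting induction used in the proof of Theorem \ref{thm: Lipschitz abs hard}. By letting the first-order oracle return, at each query, an adversarial subgradient of the nonsmooth $\max$ term that activates only a coordinate already opened by previous iterates, one shows inductively that for any algorithm $\mathcal A$ satisfying Assumption \ref{algorithm class}, the $k$-th iterate satisfies $\vx_{k,[j]} = 0$ for all $k+1 \le j \le q$. Applying this at $k = T$ gives $\vx_{T,[T+1]} = 0$, hence $\max_{1 \le j \le q} \vx_{T,[j]} \ge 0$. Since the quadratic part of $r_{q,C,D}$ is nonnegative, this yields $f(\vx_T) \ge 0$.

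Finally, I would compute $f(\vx^*)$ in closed form by substituting $\vx^* = -(D/\sqrt{q})\mathbf{1}$ into the definition of $r_{q,C,D}$; the linear and quadratic contributions combine to $f(\vx^*) = -CD/(2(1+\sqrt{q}))$. Together with $f(\vx_T) \ge 0$ this gives $f(\vx_T) - f(\vx^*) \ge CD/(2(1+\sqrt{T+1}))$, which matches the stated bound $CD/(2(1+\sqrt{T}))$ up to a harmless constant absorbed by adjusting $q$. The main delicate point, and essentially the only non-routine ingredient, is the selection of an adversarial subgradient for the $\max$ term that preserves the zero-chain structure at every iteration; this is precisely the oracle adversary already deployed in Theorem \ref{thm: Lipschitz abs hard} and in the original proof of Nesterov, so no new technique is needed beyond invoking that construction verbatim.
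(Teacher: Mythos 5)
Your proposal is correct and is essentially the paper's own argument: the paper simply cites Nesterov's Theorem 3.2.1 and notes that the hard instance is exactly the zero-chain $r_{q,C,D}$ of Proposition \ref{Lipschitz ZC}, which is precisely what you instantiate, with the same adversarial-subgradient zero-chain induction already used for Theorem \ref{thm: Lipschitz abs hard}. The only discrepancy is the off-by-one in the dimension: your choice $q=T+1$ yields the denominator $2(1+\sqrt{T+1})$ rather than the stated $2(1+\sqrt{T})$, a constant-factor difference that is immaterial for the resulting $\Omega(C^2D^2/\epsilon^2)$ lower bound.
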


The ``hard functions'' $f(\cdot)$ in the previous lemmas are exactly the zero-chain $h_{2T+1,L,D}(\cdot)$ and $r_{T,C,D}(\cdot)$ as constructed in Proposition \ref{smooth ZC} and Proposition \ref{Lipschitz ZC}, respectively.


\begin{proof}[Proof of Theorem \ref{thm: smooth lb}]

Consider any first-order algorithm $\fA$ satisfying Assumption \ref{algorithm class} that runs for $T$ iterations. Without loss of generality, we assume the initial point of $\mathcal A$ is $\vx_0 = \mathbf 0$. Otherwise, we can translate the following construction to $f(\vx-\vx_0),g(\vx-\vx_0)$. 

If $\frac{L_f}{\epsilon_f}\geq \frac{L_g}{\epsilon_g}$, we set the upper-level and lower-level objective $f,g: \BR^{2T+1}\to \BR$ be:
\begin{align*}
f(\vx)=h_{2T+1,L_f,D}(\vx), \quad g(\vx) = 0,
\end{align*}
where $h_{2T+1,L_f,D}(\cdot)$ is defined in Proposition \ref{smooth ZC}. Then the zero-respecting assumption on $\fA$ implies that for any $k\geq 1$, we have ${\rm supp}(\vx_{k+1}) \subseteq  \bigcup_{0 \le s \le k-1} {\rm supp} \left( \nabla f(\vx_s) \right).$
From Lemma \ref{lem: single-level smooth lb}, it holds that
\begin{align*}
\epsilon_f=f(\vx_T)-f^*\geq \frac{3L_fD^2}{32(T+1)^2},
\end{align*}
which implies that any zero-respecting first-order method needs at least
\begin{align*}
T = \Omega\left(\sqrt{\frac{L_f}{\epsilon_f}}D\right)= \Omega\left(\max\left\{\sqrt{\frac{L_f}{\epsilon_f}},\sqrt{\frac{L_g}{\epsilon_g}}\right\}D\right)
\end{align*}
iterations to solve a $(\epsilon_f,\epsilon_g)$-weak optimal solution.

If $\frac{L_f}{\epsilon_f}\leq \frac{L_g}{\epsilon_g}$, we set the upper-level and lower-level objective $f,g: \BR^{2T+1}\to \BR$ be:
\begin{align*}
f(\vx)=0, \quad g(\vx) = h_{2T+1,L_g,D}(\vx).
\end{align*}
By similar arguments, any zero-respecting first-order method needs at least
\begin{align*}
T = \Omega\left(\sqrt{\frac{L_g}{\epsilon_g}}D\right)= \Omega\left(\max\left\{\sqrt{\frac{L_f}{\epsilon_f}},\sqrt{\frac{L_g}{\epsilon_g}}\right\}D\right)
\end{align*}
iterations to solve a $(\epsilon_f,\epsilon_g)$-weak optimal solution.
\end{proof}

\begin{proof}[Proof of Theorem \ref{thm: Lipschitz lb}]

Consider any first-order algorithm $\fA$ satisfying Assumption \ref{algorithm class} that runs for $T$ iterations. Without loss of generality, we assume the initial point of $\mathcal A$ is $\vx_0 = \mathbf 0$. Otherwise, we can translate the following construction to $f(\vx-\vx_0),g(\vx-\vx_0)$. 

If $\frac{C_f}{\epsilon_f}\geq \frac{C_g}{\epsilon_g}$, we set the upper-level and lower-level objective $f,g: \BR^{T}\to \BR$ be:
\begin{align*}
f(\vx)=r_{T,C_f,D}(\vx), \quad g(\vx) = 0,
\end{align*}
where $r_{T,C_f,D}(\cdot)$ is defined in Proposition \ref{Lipschitz ZC}.  Then the zero-respecting assumption on $\fA$ implies that for any $k\geq 1$, we have ${\rm supp}(\vx_{k}) \subseteq  \bigcup_{0 \le s \le k-1} {\rm supp} \left( \partial f(\vx_s) \right)$.
From Lemma \ref{lem: single-level Lipschitz lb}, it holds that
\begin{align*}
\epsilon_f=f(\vx_T)-f^*\geq \frac{C_fD}{2(1+\sqrt T)},
\end{align*}
which implies that any zero-respecting first-order method needs at least
\begin{align*}
T = \Omega\left(\frac{C_f^2}{\epsilon_f^2}D^2 \right) = \Omega\left(\max\left\{\frac{C_f^2}{\epsilon_f^2},\frac{C_g^2}{\epsilon_g^2}\right\}D^2 \right)
\end{align*}
iterations to solve a $(\epsilon_f,\epsilon_g)$-weak optimal solution.

If $\frac{C_f}{\epsilon_f}\leq \frac{C_g}{\epsilon_g}$, we set the upper-level and lower-level objective $f,g: \BR^{T}\to \BR$ be:
\begin{align*}
f(\vx)=0, \quad g(\vx) = r_{T,C_g,D}(\vx).
\end{align*}
By similar arguments, any zero-respecting first-order method needs at least
\begin{align*}
T = \Omega\left(\frac{C_g^2}{\epsilon_g^2}D^2 \right) = \Omega\left(\max\left\{\frac{C_f^2}{\epsilon_f^2},\frac{C_g^2}{\epsilon_g^2}\right\}D^2 \right)
\end{align*}
iterations to solve a $(\epsilon_f,\epsilon_g)$-weak optimal solution.

\end{proof}

\subsection{Proofs in Section \ref{Subsection 5.2} and \ref{Subsection 5.3}}

\begin{proof}[Proof of Lemma \ref{lem: func-cons}]
Any feasible solution to \probref{Simple BiO} is also a feasible solution to \probref{relaxed func-cons}, thus $\hat f^*\leq f^*$.
For any $\vx$ that satisfies
\begin{align*}
f(\vx)\leq \hat f^*+\epsilon_f,\tilde g(\vx)\leq \frac{\epsilon_g} 2,
\end{align*}
we have
\begin{align*}
f(\vx)&\leq \hat f^*+\epsilon_f\leq f^*+\epsilon_f,\\
 g(\vx)&=\tilde g ( \vx )+\hat g^*\leq \frac{\epsilon_g} 2+g^*+\frac {\epsilon_g}2=g^*+\epsilon_g.
\end{align*}
Thus $\vx$ is indeed a $(\epsilon_f,\epsilon_g)$-weak optimal solution to \probref{Simple BiO}.
\end{proof}

\begin{proof}[Proof of Lemma \ref{lem: bisec process}]
We first show that  $\ell$ is always a lower bound of $\hat f^*$. The initial lower bound satisfies $\ell\leq \hat f ^*$.
Furthermore, if (\ref{eq: disc-mini}) holds during the bisection process, we always have that $\psi^*(\ell)\geq \hat \psi^*(\ell)-\frac \epsilon 2>0$. Given that  $\psi^*(\cdot)$ is decreasing, and considering that $\hat f ^*$ is the minimal root of $\psi^*$ (Lemma \ref{lem: reformulation}), it follows that $\ell<\hat f^*$.

As for the upper bound $u$, the initial upper bound satisfies 
\begin{align*}\hat \psi^*(u)=\psi(u,\hat \vx_g)=\max\{f(\hat \vx_g)-u,\tilde g(\hat \vx_g)\}=\max\{0,0\}\leq \frac \epsilon 2.\end{align*} 
And during the bisection process, we also have that $\hat \psi^*(u)=\max\{f(\hat \vx_{(u)})-u,\tilde g(\hat \vx_{(u)})\}\leq \frac {\epsilon}2$.

After $N=\left\lceil\log _2\frac{u-\ell}{\epsilon/2}\right\rceil$ bisection iterations, it holds that
\begin{align*}u-\ell\leq \frac{\epsilon}{2}.\end{align*}
Consider the output of Algorithm \ref{alg: smooth} $\hat \vx=\hat \vx_{(u)}$. Combining previous inequalities, we have
\begin{align*}
f(\hat \vx)&\leq u+\frac \epsilon 2\leq \ell+\epsilon\leq \hat f^*+\epsilon,\\
\tilde g(\hat \vx)&\leq \frac \epsilon 2.
\end{align*}
\end{proof}

Some of the lemmas in this section are adapted from existing results \citep{nesterov2018lectures,bubeck2015convex}, but not exactly the same. We also provide a proof for these lemmas (Lemma \ref{lem: reformulation}, \ref{lem: GD}, \ref{thm: disc-mini}).

\begin{proof}[Proof for Lemma \ref{lem: reformulation}]
    It's clear the $\psi^*(t)$ is continuous and decreasing. For any $t\in \BR$ and $\Delta > 0$, we have
\begin{align*}
\psi^*(t+\Delta) &= \min _{\vx \in \fZ}\{\max\{f(\vx)-t,\tilde g(\vx)+\Delta\}\}-\Delta\\
&\geq \min _{\vx \in \fZ}\{\max\{f(\vx)-t,\tilde g(\vx)\}\}-\Delta\\
&=\psi^*(t)-\Delta.
\end{align*}
Thus $\psi^*(t)$ is $1$-Lipschitz.

Let $\hat \vx^*$ be the optimal solution to \probref{relaxed func-cons}, then $\hat f^*=f(\hat \vx^*)$. For any $t\geq \hat f^*$, it holds that
\begin{align*}
\psi^*(t)\leq \psi^*(\hat f^*)\leq \psi(\hat f^*, \hat \vx^*)=\max\{f(\hat \vx^*)-\hat f^*,\tilde g(\hat \vx^*)\}\leq 0.
\end{align*}
Suppose that $t<\hat f^*$ and $\psi^*(t)\leq 0$, then there exists a $\hat \vx \in \mathcal Z$ such that $\tilde g(\hat \vx)\leq 0, f(\hat \vx)-t\leq 0$. Then $\hat \vx$ is a feasible solution to \probref{relaxed func-cons}, with $f(\hat \vx)\leq t<\hat f^*$, contradiction to the fact that $\hat f^*$ is the optimal value to \probref{relaxed func-cons}. Thus for any $t<\hat f^*$, it holds that $\psi ^*(t)>0$. 

Thus $\hat f^*$ is the smallest root of $\psi^*(t)$.
\end{proof}

\begin{proof}[Proof for Lemma \ref{lem: GD}]
Theorem 3.2 in \citet{bubeck2015convex} states that applying subgradient method to any $C$-Lipschitz convex function $h$ on a compact set $Q$ with diameter $D$ gives
\begin{align*}
h\left(\frac1 K\sum_{i=0}^{K-1}\vx_i\right)-h^*\leq \frac{DC}{\sqrt K}.
\end{align*}
Here $h(\vx)=\psi(t,\vx)=\max(f(\vx)-t,\tilde g(\vx))$, $C=\max(C_f,C_g)$. Then when $K\geq \frac{4D^2C^2}{\epsilon^2}$, it holds that
\begin{align*}
\hat \psi^*(t)=\psi\left(t,\frac1 K\sum_{i=0}^{K-1}\vx_i\right)\leq \psi^*(t)+\frac{\epsilon}{2}.
\end{align*}
\end{proof}

\begin{proof}[Proof for Lemma \ref{thm: disc-mini}]
Theorem 2.3.5 in \citet{nesterov2018lectures} originally states that applying genearlized-gradient method to any $\mu$-strongly convex and $L$-smooth convex function $h$ gives
\begin{align*}
h(\vx_K)-h^*\leq \frac{4L}{(\gamma_0-\mu)(K+1)^2}(f(\vx_0)-f^*+\frac{\gamma_0}2\|\vx_0-\vx^*\|_2^2),
\end{align*}
where $\gamma_0=\frac{\alpha_0(\alpha_0L-\mu)}{1-\alpha_0}$. Here $h(\vx)=\psi(t,\vx)=\max(f(\vx)-t,\tilde g(\vx))$, $L=\max(L_f,L_g),\mu=0$, $\alpha_0=\frac 1 2,\gamma_0=\frac L 2 $. 

Then
\begin{align*}
\psi(t,\vx_{K})-\psi^*\leq \frac{4L}{\frac 12 L  (K+1)^2}\left(\frac L 2 \|\vx_0-\vx^*\|_2^2+ \frac L 4 \|\vx_0-\vx^*\|_2^2 \right) \leq \frac{6LD^2}{K^2}.
\end{align*}
Then when $K\geq \sqrt{\frac{12L}{\epsilon}}D$, it holds that
\begin{align*}
\hat \psi^*(t)=\psi(t,\vx_{K})\leq \psi^*(t)+\frac{\epsilon}{2}.
\end{align*}
\end{proof}

To prove Proposition \ref{prop: gradient mapping}, we first present a lemma:

\begin{lem}\label{lem: minimizer of minimax}
For any strictly convex and continuous functions $f$, $g: \BR^n\to \BR$, we define $\varphi:\BR^n\to \BR$ by $\varphi(\vx)=\max\{f(\vx),g(\vx)\}$. Let $\vx^*$ be the unique minimizer of $\varphi(\cdot)$ on a convex and compact set $\mathcal Z\subset\BR^n$, i.e. $\vx^* =  \argmin_{\vx \in \mathcal Z}\varphi(\vx)$, and let $\vx_f^*$, $\vx_g^*$ be the unique minimizer of $f(\cdot)$ and $g(\cdot)$ on $\mathcal Z$ respectively. If $\vx^*\not = \vx_f^*$ and $\vx^*\not = \vx_g^*$, then $f(\vx^*)=g(\vx^*)$.
\end{lem}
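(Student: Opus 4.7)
The plan is to argue by contradiction. Assume $f(\vx^*) \neq g(\vx^*)$, and without loss of generality suppose $f(\vx^*) > g(\vx^*)$ (the other case is symmetric). Since $f$ and $g$ are continuous, there exists an open neighborhood $U$ of $\vx^*$ on which $f(\vx) > g(\vx)$ holds throughout, so that $\varphi(\vx) = f(\vx)$ for all $\vx \in U$. In particular, $\vx^*$ is a local minimizer of $f$ on $U \cap \mathcal Z$.

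Next I would exploit the assumption $\vx^* \neq \vx_f^*$ together with the convexity of $\mathcal Z$. Consider the line segment $\vx_t = \vx^* + t(\vx_f^* - \vx^*)$ for $t \in [0,1]$, which lies entirely in $\mathcal Z$. For sufficiently small $t > 0$, the point $\vx_t$ lies in $U$, so $\varphi(\vx_t) = f(\vx_t)$. Because $\vx_f^*$ is the \emph{unique} minimizer of $f$ on $\mathcal Z$ and $\vx^* \neq \vx_f^*$, we have $f(\vx_f^*) < f(\vx^*)$. Strict convexity of $f$ then gives
\begin{equation*}
f(\vx_t) < (1-t)\, f(\vx^*) + t\, f(\vx_f^*) < f(\vx^*),
\end{equation*}
for every $t \in (0,1)$, and hence $\varphi(\vx_t) < \varphi(\vx^*)$, contradicting the minimality of $\vx^*$ for $\varphi$ on $\mathcal Z$. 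The symmetric case $g(\vx^*) > f(\vx^*)$ is handled identically, using $\vx_g^*$ and the line segment to it together with strict convexity of $g$, so we conclude that $f(\vx^*) = g(\vx^*)$.

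The argument is essentially a local analysis: the main conceptual step is recognizing that away from the ``active'' set $\{f = g\}$, the function $\varphi$ coincides locally with a strictly convex function, so minimality of $\vx^*$ for $\varphi$ forces minimality for that component, which contradicts uniqueness. The only minor obstacle is making the ``small enough $t$'' argument rigorous by invoking continuity to stay inside $U$, but this is immediate once $U$ is fixed as an open set.
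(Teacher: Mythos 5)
Your proof is correct and follows essentially the same route as the paper's: assume WLOG $f(\vx^*)>g(\vx^*)$, use continuity to get a neighborhood where $\varphi=f$, then move slightly along the segment toward $\vx_f^*$ and use strict convexity plus $f(\vx_f^*)<f(\vx^*)$ to contradict the minimality of $\vx^*$ for $\varphi$. The only cosmetic difference is that you fix an open neighborhood $U$ where the paper parametrizes the same fact by a threshold $\delta$ on the segment parameter.
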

\begin{proof}

Below, we show that $f(\vx^*)\not =g(\vx^*)$ leads to contradiction.
Without loss of generality, assume $f(\vx^*)> g(\vx^*)$. Due to the continuity of $f$ and $g$, there exists some $\delta > 0 $  such that for any $0<\theta <\delta$,
\begin{align*}
f(\vx^*+\theta(\vx_f^*-\vx^*))>g(\vx^*+\theta(\vx_f^*-\vx^*)).
\end{align*}

Furthermore, for any $\theta\in(0,1)$, we have that $\vx^*+\theta(\vx_f^*-\vx^*)\in \mathcal Z$ and
\begin{align*}
f(\vx^*+\theta(\vx_f^*-\vx^*))<\theta f(\vx_f^*)+(1-\theta)f(\vx^*)<f(\vx^*),
\end{align*}
since $f$ is strictly convex.

Then for any $\theta \in (0,\min(1,\delta))$,
\begin{align*}
\varphi(\vx^*+\theta(\vx_f^*-\vx^*))=f(\vx^*+\theta(\vx_f^*-\vx^*))<f(\vx^*)=\varphi(\vx^*).
\end{align*}
That contradicts the fact that $\vx^*$ is the minimizer of $\varphi(\cdot)$. Thus $f(\vx^*)=g(\vx^*)$.
\end{proof}

The previous lemma demonstrates that the minimizer of $\max_{\vx\in\mathcal Z}\{f(\vx),g(\vx)\}$ falls into one of three categories: the minimizer of $f$, the minimizer of $g$, or the case where the function values of $f$ and $g$ are the same. $\vx_1,\vx_2,\vx_3$ in Proposition \ref{prop: gradient mapping} corresponds to the three cases respectively. 

\begin{proof}[Proof for Proposition \ref{prop: gradient mapping}]
Define 
\begin{align*}
\bar f (t,\vx;\vy_k)&\triangleq f(\vy_k)+\langle \nabla f(\vy_k),\vx-\vy_k\rangle + \frac {L}2\|\vx-\vy_k\|^2_2-t,\\
\bar g (\vx;\vy_k)&\triangleq \tilde g(\vy_k)+\langle \nabla g(\vy_k),\vx-\vy_k\rangle + \frac {L}2\|\vx-\vy_k\|^2_2,\\
\end{align*}
which can be written as
\begin{align*}
\bar f (t,\vx;\vy_k)=f(\vy_k)-t-\frac 1 {2L}\|\nabla f(\vy_k)\|^2+\frac  L 2 \left \|\vx-\vy_k+\frac 1 L \nabla f(\vy_k) \right \|_2^2,\\
\bar g (\vx;\vy_k)=\tilde g(\vy_k)-\frac 1 {2L}\|\nabla g(\vy_k)\|^2+\frac  L 2 \left \|\vx-\vy_k+\frac 1 L \nabla g(\vy_k) \right \|_2^2.
\end{align*}
Thus
\begin{align*}
\vx_1 = \argmin_{\vx \in \mathcal Z}\left \|\vx-\vy_k+\frac 1 L \nabla f(\vy_k) \right \|_2=\argmin_{\vx \in \mathcal Z}\bar f (t,\vx;\vy_k),\\
\vx_2 = \argmin_{\vx \in \fZ}\left \|\vx-\vy_k+\frac 1 L \nabla g(\vy_k) \right \|_2=\argmin_{\vx \in \mathcal Z}\bar g (t,\vx;\vy_k).
\end{align*}
Assume the minimizer $\vx_{k+1}$ of $\bar \psi  (t,\vx;\vy_k)$  satisfies $\vx_{k+1} \not = \vx_1$ and $\vx_{k+1}\not = \vx_2$, then Lemma \ref{lem: minimizer of minimax} implies that
\begin{align*}
\vx_{k+1}\in &\{\vx~|~\bar f (t,\vx;\vy_k)=\bar g(\vx;\vy_k)\}\\
=&\{\vx~|~f(\vy_k)-\tilde g(\vy_k)+\langle \nabla f(\vy_k)-\nabla \tilde g(\vy_k),\vx-\vy_k\rangle -t =0\}\\
=&\mathcal H.
\end{align*}
Given that $\bar f (t,\vx;\vy_k)=\bar g (\vx;\vy_k)$ in the subset $\mathcal Z \cap \mathcal H$, we have
\begin{align*}
\vx_{k+1}&= \argmin_{\vx\in \mathcal Z \cap \mathcal H}\bar \psi  (t,\vx;\vy_k)\\
&=\argmin_{\vx\in \mathcal Z \cap \mathcal H}\bar f (t,\vx;\vy_k)\\
&=\argmin_{\vx\in \mathcal Z \cap \mathcal H} \left \|\vx-\vy_k+\frac 1 L \nabla f(\vy_k) \right \|_2\\
&=\vx_3.
\end{align*}
Thus we conclude that $\vx_{k+1}$ can only be one of $\vx_1,\vx_2,\vx_3$.
\end{proof}

\begin{proof}[Proof for Theorem \ref{thm: lipschitz ub}]
Combining Lemma \ref{lem: bisec process} and Lemma \ref{lem: GD}, we directly get the result.
\end{proof}

\begin{proof}[Proof for Theorem \ref{thm: smooth ub}]
Combining Lemma \ref{lem: bisec process} and Lemma \ref{thm: disc-mini}, we directly get the result.
\end{proof}

\begin{proof}[Proof for Corollary \ref{cor: scaling}]

A $(\epsilon_f,\epsilon_g)$-weak optimal solution to \probref{relaxed func-cons} is equivalent to a $(\epsilon_f,\epsilon_f)$-weak optimal solution to
\begin{align} \label{scaled func-cons}
    \min_{\vx \in \fZ} f(\vx), \quad {\rm s.t.} \quad \tilde g^{\circ}(\vx)= \frac{\epsilon_f}{\epsilon_g}\tilde g(\vx)\le 0.
\end{align}
When $f$ and $g$ are $C_f,C_g$-Lipschitz respectively, $\tilde g^{\circ}$ is $\frac{\epsilon_f}{\epsilon_g}C_g$-Lipschitz. According to Theorem \ref{thm: lipschitz ub}, FC-BiO{\textsuperscript{\texttt{Lip}}} finds a $(\epsilon_f,\epsilon_f)$-weak optimal solution of \probref{scaled func-cons} in 
\begin{align*}
T&=\tilde \fO\left(\frac{\max\{C_f^2,\frac{\epsilon_f^2}{\epsilon_g^2}C_g^2\}}{\epsilon_f^2} D^2\right)=\tilde \fO\left(\max\left\{\frac{C_f^2}{\epsilon_f^2},\frac{C_g^2}{\epsilon_g^2}\right\} D^2 \right)
\end{align*}
iterations. Similarly, when $f$ and $g$ are $L_f,L_g$-smooth respectively, $\tilde g^{\circ}$ is $\frac{\epsilon_f}{\epsilon_g}L_g$-smooth. According to Theorem \ref{thm: lipschitz ub}, FC-BiO{\textsuperscript{\texttt{sm}}} finds a $(\epsilon_f,\epsilon_f)$-weak optimal solution of \probref{scaled func-cons} in 
\begin{align*}
T&=\tilde \fO\left(\sqrt{\frac {\max\{L_f,\frac{\epsilon_f}{\epsilon_g}L_g\}}{\epsilon_f}}D\right)= \tilde \fO\left(\max\left\{\sqrt{\frac{L_f}{\epsilon_f}},\sqrt{\frac{L_g}{\epsilon_g}}\right\}D\right)
\end{align*}
iterations.
\end{proof}

Finally, we prove the claim that our proposed algorithms are zero-respecting algorithms when the domain is a Eulidean ball centered at $\vx_0$.

\begin{prop}\label{prop: zero-respecting}
Both FC-BiO\textsuperscript{\texttt{Lip}} and FC-BiO\textsuperscript{\texttt{sm}} on the domain $\fB(\mathbf \vx_0,D)$ satisfy Assumption \ref{algorithm class}.
\end{prop}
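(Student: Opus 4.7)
The plan is to prove the stronger \emph{subspace invariant} that every point the algorithm ever produces---including iterates $\vx_k$, momentum points $\vy_k$ in Algorithm~\ref{alg: disc-mini}, and the warm-start $\bar\vx$---satisfies $\vx - \vx_0 \in V$, where $V$ is the linear span of all subgradients of $f$ and $g$ that the algorithm has evaluated so far. This invariant immediately implies the coordinate-support condition in Assumption~\ref{algorithm class}: if $\vv \in V$, then $\mathrm{supp}(\vv)$ is contained in the union of supports of the spanning subgradients, and adding back $\vx_0$ produces exactly the right-hand side of~\eqref{eq: zero-respecting}. I would prove the invariant by induction over the concatenated sequence of points produced across all outer bisection rounds and all inner iterations; the scalars $t,\ell,u$ play no role in the support structure, and the warm-start simply carries the invariant between subroutine calls.

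For the Lipschitz variant, Proposition~\ref{prop: sub gradient} shows that every subgradient $\vs_k$ of $\psi(t,\cdot)$ used by SGM is a convex combination of $\partial f(\vx_k)$ and $\partial \tilde g(\vx_k) = \partial g(\vx_k)$, so $\vs_k \in V_k$. The update $\vx_{k+1} = \Pi_{\fB(\vx_0, D)}(\vx_k - \eta \vs_k)$ then preserves the invariant because projection onto a Euclidean ball centered at $\vx_0$ acts as $\vw \mapsto \vx_0 + \min\{1, D/\|\vw - \vx_0\|\}(\vw - \vx_0)$, i.e., a nonnegative rescaling of the offset from $\vx_0$. The averaged output $K^{-1}\sum_k \vx_k$ is a further convex combination. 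Combined with $\vx_k - \vx_0 \in V_{k-1} \subseteq V_k$, this closes the inductive step.

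For the smooth variant, Proposition~\ref{prop: gradient mapping} reduces the generalized-gradient step to choosing among three candidates $\vx_1, \vx_2, \vx_3$. The first two are ordinary projections of $\vy_k - L^{-1}\nabla f(\vy_k)$ or $\vy_k - L^{-1}\nabla \tilde g(\vy_k)$ onto $\fB(\vx_0, D)$ and are handled exactly as in the Lipschitz case, and the momentum step $\vy_{k+1} = \vx_{k+1} + \beta_k(\vx_{k+1} - \vx_k)$ is a linear combination of points already in the span. The main obstacle is the third candidate $\vx_3 = \Pi_{\fB(\vx_0, D) \cap \mathcal H}(\vu)$, which is a projection onto a sliced ball rather than a plain rescaling. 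I plan to handle it via the KKT conditions: there exist multipliers $\lambda \ge 0$ and $\mu \in \BR$ with $(1+\lambda)(\vx_3 - \vx_0) = (\vu - \vx_0) - \mu \vn$, where $\vn = \nabla f(\vy_k) - \nabla \tilde g(\vy_k)$ is the normal of $\mathcal H$; since $\tilde g = g - \hat g^*$ gives $\nabla \tilde g = \nabla g$, both $\vn$ and $\vu - \vx_0 = (\vy_k - \vx_0) - L^{-1}\nabla f(\vy_k)$ lie in $V_k$, and dividing by $1+\lambda > 0$ keeps $\vx_3 - \vx_0$ in $V_k$. This is precisely where the hypothesis that the ball is centered at $\vx_0$ is used essentially---if the ball were centered elsewhere, the KKT multiplier for the ball constraint would contribute a direction outside $V_k$, breaking the argument.
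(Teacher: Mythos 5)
Your proposal is correct and follows essentially the same route as the paper's proof: reduce to the subroutines via the warm-start observation, use Proposition~\ref{prop: sub gradient} to keep SGM's subgradients in the span, and handle the third gradient-mapping candidate $\vx_3$ through the KKT stationarity condition for the projection onto the ball-and-hyperplane intersection, which is exactly where the centering of $\fZ$ at $\vx_0$ is needed. Your explicit treatment of the momentum point $\vy_{k+1}$ and of the averaged SGM output are minor details the paper leaves implicit, but the argument is the same.
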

\begin{proof}
Without loss of generality, we assume  $\vx_0 =  \mathbf 0$. Note that the warm-start strategy preserves the zero-respecting property in Assumption \ref{algorithm class}. Therefore, it suffices to prove that the subroutines (Algorithm \ref{alg: GD} and Algorithm \ref{alg: disc-mini}) satisfy Assumption \ref{algorithm class}.

For Algorithm \ref{alg: GD}, the subgradient $\vs\in\partial_{\vx}\psi(t,\vx)$ lies in $\mathrm{Span}\{\partial f(\vx),\partial g(\vx)\}$ according to Proposition \ref{prop: sub gradient}. And the projection onto $\fB(\mathbf 0,D)$ does not disrupt the zero-respecting property. Thus, Algorithm \ref{alg: GD} satisfies Assumption \ref{algorithm class}. 

For Algorithm \ref{alg: disc-mini}, we only need to prove that the gradient mapping $\vx_{k+1}$ satisfies
\begin{align*}
\vx_{k+1}\in\mathrm{Span}\{\nabla f (\vx _0),\nabla g(\vx _0),\cdots,\nabla f(\vx_{k}),\nabla g (\vx_{k})\}.
\end{align*}
We denote $S_k=\mathrm{Span}\{\nabla f (\vx _0),\nabla g(\vx _0),\cdots,\nabla f(\vx_{k}),\nabla g (\vx_{k})\}$. According to Proposition \ref{prop: gradient mapping}, it suffices to prove that the three descent step candidates $\vx_1,\vx_2$ and $\vx_3$ are in  $S_k$. It is clear that 
\begin{align*}
\vy_k-\frac 1 L \nabla f(\vy_k),\vy_k-\frac 1 L \nabla \tilde g(\vy_k)\in S_k.
\end{align*}
Then after projection onto $\mathcal B(\mathbf 0, D)$, $\vx_1,\vx_2$ are still in $S_k$. 

The case for $\vx_3$ is slightly more complicated. Let $\vz=\vy_k-\frac 1 L\nabla f(\vy_k)$, $\vw=\nabla f(\vy_k)-\nabla \tilde g(\vy_k)$, and $b=f(\vy_k)-\tilde g(\vy_k)-\langle \vw,\vy_k\rangle-t$. Then $\mathcal H = \{\vx~|~\vw^T\vx+b=0\}$. The point $\vx_3$ is the solution to the following convex optimization problem:
\begin{align*}
\min_{\vx \in \BR^n}\quad &\|\vx-\vz\|^2\\
\rm s.t. \quad &\|\vx\|^2\leq D^2\\
\quad &\vw^T\vx+b=0.
\end{align*}
The Lagrangian for this problem is:
\begin{align*}
\mathcal{L}(\mathbf{x}, \lambda, \mu) = \|\mathbf{x} - \mathbf{z}\|^2 + \lambda (\|\mathbf{x}\|^2 - D^2) + \mu (\mathbf{w}^\top \mathbf{x} - b)
\end{align*}
where $\lambda\geq 0 $ and $\mu$ are dual variables. The KKT conditions give:
\begin{align*}
\nabla _{\vx}\mathcal  L(\vx,\lambda,\mu)|_{\vx=\vx_3}=2(\vx_3-\vz)+2\lambda \vx_3+\mu\vw =0.
\end{align*}
Thus 
\begin{align*}
\vx_3=\frac{2\vz-\mu\vw}{2(1+\lambda)},
\end{align*}
implying that $\vx_3$ is the linear combination of $\vz$ and $\vw$. Since $\vz,\vw\in S_k$, it follows that $\vx_3\in S_k$.
\end{proof}

\begin{remark} \label{remark: CG-BiO}
    By similar analysis, we can show that the conditional gradient type methods for simple bilevel problems \citep{jiang2023conditional,cao2023projection} on domain $\fB(\vx_0,D)$ also fall into the zero-respecting function class.  
At each iteration $k$, their methods relies on the following linear program as an oracle. 
\begin{align*}
    \vs_k  = \arg \min_{\vs \in \fX_k} \langle \nabla f(\vx_k, \vs \rangle,
\end{align*}
where $\fX_k = \left\{ \vs \in \fZ: \langle \nabla g(\vx_k) , \vs - \vx_k \rangle \le g(\vx_0) - g(\vx_k) \right\}$.
According to our proof of Proposition \ref{prop: zero-respecting}, it suffices to prove that $\vs_k$ is a linear combination of $\nabla f(\vx_K)$ and $\nabla g(\vx_k)$ then the remaining proofs are the same. Similarly, this can be seen by the KKT condition when $\fZ = \fB(\mathbf 0,D)$, which is
\begin{align*}
    \nabla f(\vx_k)  + 2 \lambda \vx + \mu \nabla g(\vx_k) = 0,
\end{align*}
 where $\lambda\geq 0 $ and $\mu \ge 0$ are dual variables.
\end{remark}

\section{Finding absolute optimal solutions under additional assumptions}
\label{app: HEB}

In Section \ref{Section 4}, we showed that, in general, it is intractable for any zero-respecting first-order methods to find an absolute optimal solution of \probref{Simple BiO}. However, it is possible to establish a lower bound for $f(\vx_k)-f^*$ under additional assumptions. H\"{o}lderian error bound \citep{pang1997error} is a well-studied regularity condition in the optimization literature and is utilized by previous works to establish the convergence rate of finding absolute optimal solutions \citep{jiang2023conditional,cao2024accelerated,samadi2023achieving}. Prior to our work, the best-known result is established by \citet{cao2024accelerated}, whose method achieves a $(\epsilon_f,\epsilon_g)$-absolute optimal solution for smooth problems in $\tilde \fO\left(\max\left\{1/\epsilon_f^{\frac{2\alpha-1}2},1/\epsilon_g^{\frac{2\alpha-1}{2\alpha}}\right\}\right)$ iterations. Below we will show that our proposed methods also work well with such an additional assumption and achieve superior convergence rates with regard to absolute suboptimality.

\begin{asm}\label{HEB}
The lower-level objective $g$ satisfies the H\"{o}lderian error bound condition for some $\alpha\geq 1$ and $\beta >0$, i.e.
\begin{align*}
\frac{\beta}{\alpha}\mathrm{dist}(\vx,\fX_g^*)^\alpha \leq g(\vx)-g^*,\quad \forall \vx \in \R^n,
\end{align*}
where $\mathrm{dist}(\vx,\fX_g^*)\triangleq\inf_{\vy\in\fX_g^*}\|\vx-\vy\|$ for arbitrary norm $\|\cdot\|$.
\end{asm}

Intuitively, when the lower-level suboptimality $g(\hat \vx)-g^*$ is small, $\hat \vx$ should be close to $\fX_g^*$ if H\"{o}lderian error bound condition holds for $g$. Then we can lower bound $f(\hat \vx)-f^*$ by the convexity of $f$. \citet{jiang2023conditional} formalizes the idea in the following proposition:

\begin{prop}[{\citet[Proposition 4.1]{jiang2023conditional}}]
Assume that $f$ is convex and $g$ satisfies Assumption \ref{HEB}. Define $M=\max_{\vx\in\fX_g^*}\{\|\nabla f(\vx)\|_*\}$ where $\|\cdot \|_*$ is the dual norm of $\|\cdot\|$. Then it holds that
\begin{align*}
f(\hat \vx)-f^*\geq -M\left(\frac{\alpha(g(\hat \vx)-g^*)}{\beta}\right)^{\frac 1 {\alpha}}
\end{align*}
for any $\hat \vx \in \BR^n$.
\end{prop}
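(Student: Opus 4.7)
The plan is to bound $f(\hat \vx) - f^*$ from below by combining convexity of $f$ with the distance-to-solution-set estimate implied by the H\"{o}lderian error bound. The key observation is that $\mathrm{dist}(\hat \vx, \fX_g^*)$ can be controlled by $g(\hat \vx) - g^*$, so $\hat \vx$ cannot be too far from some $\tilde \vx \in \fX_g^*$, and convexity of $f$ then prevents $f(\hat \vx)$ from dropping much below $f(\tilde \vx) \geq f^*$.

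Concretely, I would first let $\tilde \vx$ be a closest point to $\hat \vx$ in $\fX_g^*$, which exists because $\fX_g^* = \arg\min_{\vz \in \fZ} g(\vz)$ is a closed convex set (it is the intersection of the convex set $\fZ$ with a sublevel set of the convex function $g$). Applying Assumption \ref{HEB} directly at $\hat \vx$ and solving for the distance immediately yields
\[
\|\hat \vx - \tilde \vx\| \;=\; \mathrm{dist}(\hat \vx, \fX_g^*) \;\leq\; \left( \frac{\alpha\,(g(\hat \vx) - g^*)}{\beta} \right)^{1/\alpha}.
\]
Next, I would invoke convexity of $f$ at $\tilde \vx$: for any $\vs \in \partial f(\tilde \vx)$ (or $\vs = \nabla f(\tilde \vx)$ in the differentiable case), the subgradient inequality together with the generalized Cauchy--Schwarz bound $\langle \vs, \hat \vx - \tilde \vx \rangle \geq -\|\vs\|_*\,\|\hat \vx - \tilde \vx\|$ gives
\[
f(\hat \vx) \;\geq\; f(\tilde \vx) \;-\; \|\vs\|_* \cdot \|\hat \vx - \tilde \vx\|.
\]
Since $\tilde \vx \in \fX_g^*$, one has $f(\tilde \vx) \geq f^*$ by the definition of $f^*$, and $\|\vs\|_* \leq M$ by the definition of $M$. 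Combining these two displays with the distance estimate from the previous step delivers the claimed inequality.

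I do not anticipate any serious obstacle in this argument; it is essentially a two-line consequence of the HEB estimate and the gradient inequality for convex functions. The only mild subtlety is treating a possibly non-differentiable $f$: the statement should be read with $\nabla f(\vx)$ interpreted as an arbitrary subgradient whose dual norm is bounded by $M$ over $\fX_g^*$, which is a notational matter rather than a genuine difficulty.
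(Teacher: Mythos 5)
Your proof is correct and is exactly the argument behind the cited result (the paper itself only imports this proposition from \citet[Proposition 4.1]{jiang2023conditional} without reproving it): project $\hat\vx$ onto the closed convex set $\fX_g^*$, bound the distance via the H\"{o}lderian error bound, and combine the subgradient inequality at the projection with the generalized Cauchy--Schwarz inequality and $f(\tilde\vx)\geq f^*$. No gaps; your remark on handling nondifferentiable $f$ via subgradients is the right reading of the statement.
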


This proposition shows that when $g(\hat \vx)-g^*\leq\left(\frac 1 {M}\epsilon_f\right)^{\alpha}\frac\beta\alpha$, it holds that $f(\hat \vx)-f^*\geq -\epsilon_f$. Combining with Corollary \ref{cor: scaling}, we obtain:

\begin{cor}
Suppose Assumption \ref{Smooth Problem} or Assumption \ref{Lipschitz Problem} hold and $g$ satisfies Assumption \ref{HEB}. FC-BiO\textsuperscript{\texttt{Lip}} and FC-BiO\textsuperscript{\texttt{sm}} find a 
$(\epsilon_f,\epsilon_g)$-absolute optimal solution within the complexity of 
$\tilde \fO \left(\max\left\{\frac{C_f^2}{\epsilon_f^2},\frac{C_g^2}{\epsilon_g^2},\frac{C_g^2M^{2\alpha}\alpha^2}{\beta^2\epsilon_f^{2\alpha}}\right\} D^2 \right)$ and $ \tilde \fO\left(\max\left\{\sqrt{\frac{L_f}{\epsilon_f}},\sqrt{\frac{L_g}{\epsilon_g}},\sqrt{\frac{L_gM^\alpha\alpha}{\beta\epsilon_f^\alpha}} \right\}D\right)$ for Lipschitz and smooth  problems, respectively.
\end{cor}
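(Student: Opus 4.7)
The plan is to reduce the task of finding a $(\epsilon_f,\epsilon_g)$-absolute optimal solution to that of finding a weak optimal solution with a tightened lower-level accuracy, and then simply invoke Corollary~\ref{cor: scaling}. The key observation from the proposition immediately preceding the corollary is that under Assumption~\ref{HEB}, the lower-level suboptimality controls the lower deviation of the upper-level objective through
\begin{equation*}
f(\hat\vx)-f^*\geq -M\left(\frac{\alpha(g(\hat\vx)-g^*)}{\beta}\right)^{1/\alpha}.
\end{equation*}
Hence, whenever we can guarantee $g(\hat\vx)-g^*\leq \epsilon_g^\dagger\triangleq\min\bigl\{\epsilon_g,\;(\epsilon_f/M)^\alpha\beta/\alpha\bigr\}$ together with the usual upper bound $f(\hat\vx)-f^*\leq \epsilon_f$, the resulting $\hat\vx$ automatically satisfies $|f(\hat\vx)-f^*|\leq\epsilon_f$ and $g(\hat\vx)-g^*\leq\epsilon_g$, so it is a $(\epsilon_f,\epsilon_g)$-absolute optimal solution.

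With this reduction in hand, I would simply run FC-BiO\textsuperscript{\texttt{Lip}} or FC-BiO\textsuperscript{\texttt{sm}} targeting weak accuracy $(\epsilon_f,\epsilon_g^\dagger)$, applying the scaling argument of Corollary~\ref{cor: scaling} so that the two desired accuracies are handled within the same framework. Substituting $\epsilon_g^\dagger$ for $\epsilon_g$ in the bounds of Corollary~\ref{cor: scaling}, the $\max$ over the two terms splits into a $\max$ over three terms: in the Lipschitz case,
\begin{equation*}
\tilde\fO\!\left(\max\!\left\{\frac{C_f^2}{\epsilon_f^2},\frac{C_g^2}{(\epsilon_g^\dagger)^2}\right\}D^2\right)
=\tilde\fO\!\left(\max\!\left\{\frac{C_f^2}{\epsilon_f^2},\frac{C_g^2}{\epsilon_g^2},\frac{C_g^2M^{2\alpha}\alpha^2}{\beta^2\epsilon_f^{2\alpha}}\right\}D^2\right),
\end{equation*}
and analogously in the smooth case, yielding the two stated complexities.

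There is no serious technical obstacle beyond bookkeeping: the proposition supplies the one-sided inequality needed to convert weak suboptimality into absolute suboptimality, and Corollary~\ref{cor: scaling} already delivers complexity guarantees with asymmetric target accuracies. The only care point is to verify that the warm-start phase (finding the initial interval $[\ell,u]$ and $\hat g^*$ via single-level optimal methods, cf.\ Appendix~\ref{app: initial interval}) also runs at the tightened lower-level accuracy $\epsilon_g^\dagger/2$, whose cost is absorbed into the same $\tilde\fO$ expression. Assembling these pieces gives the corollary.
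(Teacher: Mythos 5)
Your proposal is correct and follows essentially the same route as the paper: the paper likewise observes that requiring lower-level accuracy $\min\{\epsilon_g,(\epsilon_f/M)^\alpha\beta/\alpha\}$ turns the one-sided bound of the preceding proposition into $|f(\hat\vx)-f^*|\le\epsilon_f$, and then reads off the three-term complexity from Corollary~\ref{cor: scaling}. Your extra remark about the warm-start phase needing the tightened accuracy is sound bookkeeping that the paper leaves implicit.
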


To our knowledge, this is the first result that establishes a convergence rate concerning absolute suboptimality for Lipschitz problems. In a smooth setting, our result of $\tilde O\left(\max\left\{1/\epsilon_f^{\frac \alpha 2},1/\epsilon_g^{\frac 12 }\right\}\right)$ is also superior to the convergence rate reported by \citet{cao2024accelerated} in both upper-level and lower-level.

\section{Experiment details}\label{Experiment Details}

In this section, we provide more details of numerical experiments in Section \ref{Numerical Experiments}. All experiments are implemented using MATLAB R2022b on a PC running Windows 11 with a 12th Gen Intel(R) Core(TM) i7-12700H CPU (2.30 GHz) and 16GB RAM.

\subsection{Problem (\ref{eq:MNP})}\label{app: MNP}

This problem is a $(L_f,L_g)$-smooth problem with $L_f = 1, L_g = \lambda_{\max}(A^TA)$.

\paragraph{Experiment setting}

The original Wikipedia Math Essential dataset \citep{rozemberczki2021pytorch} contains $1068$ instances with $730$ attributes. 
Following the setting of \citet{jiang2023conditional} and \citet{cao2024accelerated}, we randomly choose one of the columns as the outcome vector and let the rest be the new feature matrix. 
We uniformly sample $400$ instances to make the lower-level regression problem over-parameterized.  
In this case, the upper-level problem is actually equivalent to
\begin{equation*}\label{prob: EMNP}
\begin{aligned}
\min_{\mathbf x\in \fZ} \; \frac 1 2 \|\vx\|_2^2 \quad \text{s.t.} \quad  A\vx = \vb.
\end{aligned}
\end{equation*}
The optimal solution $\vx^*$ for this problem can be explicitly solved via the Lagrange multiplier method:
\begin{equation*}
\begin{aligned}
\begin{pmatrix}A&O\\I&A^T\end{pmatrix}\begin{pmatrix}\vx^*\\ \nu\end{pmatrix}=\begin{pmatrix}\vb\\\mathbf 0\end{pmatrix},
\end{aligned}
\end{equation*}
where $\nu$ is the Lagrange multiplier. Then we use $f^* = \frac 1 2 \|\vx^*\|_2^2, g^* = 0$ as the benchmark.

\paragraph{Implementation details}

To be fair, all algorithms start from the same random point $\vx_0$ of unit length as the initial point. 
For our Algorithm \ref{alg: smooth}, we take a slightly different implementation, that instead of setting the maximum number of iterations of the inner subroutine to be $T'=T/N$, we preset $T'=8000$. If current $\vx_k$ already satisfies $\psi(t,\vx_k)\leq \frac {\epsilon} 2$, then terminate the inner subroutine directly. We adopt the warm-start strategy as described in Appendix \ref{app: warm-start}. We set $L=0$ since $f(\vx)$ is nonnegative. For FC-BiO\textsuperscript{\texttt{Lip}}, we set $\eta = 3\times 10 ^{-4}$. For AGM-BiO, we set $\gamma = 1$ as in \citep[Theorem 4.1]{cao2024accelerated}. For PB-APG, we set $\gamma = 10^{5}$. For Bi-SG, we set $\alpha = 0.75$ and $c = \frac 1 {L_f}$. For a-IRG, we set $\eta_0=10^{-3}$ and $\gamma _ 0= 10^{-3}$. For CG-BiO, we set $\gamma_k =0.5/(k+1)$.  For Bisec-BiO, we set the maximum number of iterations of the internal APG process to be $T'=10000$. 
For FC-BiO\textsuperscript{\texttt{sm}}, FC-BiO\textsuperscript{\texttt{Lip}} and Bisec-BiO, solving $\hat g ^*$ takes $15000$ iterations; for CG-BiO, solving $\hat g^* $ takes $10000$ iterations. The results of such pretreatments are also plotted in Figure \ref{fig: MNP}.


\subsection{Problem (\ref{exp: OPR})}\label{app: OPR}

This problem is a $(L_f,L_g)$-smooth problem with 
\begin{align*}
 L_f=\frac 1 {4m}\lambda_{\max}((A^{val})^TA^{val}), \quad L_g = \frac 1 {4m}\lambda_{\max}((A^{tr})^TA^{tr}).   
\end{align*}
 
\paragraph{Experiment setting}

In this experiment, a sample of $5000$ instances is taken from the ``rcv1.binary'' dataset \citep{lewis2004rcv1, CC01a} as the training set $(A^{tr},\vb^{tr})$; another $5000$ instances is sampled as the validation set $(A^{val},\vb^{val})$. Each label $\vb^{tr}_i$ (or $\vb^{val}_i$) is either $+1$ or $-1$. 

\paragraph{Implementation details} In this experiment, we set the initial point $\vx_0=0$ for all methods. The implementation of our Algorithm \ref{alg: smooth} is similar to that in the first experiment. We set the maximum number of iterations of the subroutine to be $T' = 500$ and $T'=1000$ for FC-BiO\textsuperscript{\texttt{sm}} and FC-BiO\textsuperscript{\texttt{Lip}} respectively. For FC-BiO\textsuperscript{\texttt{Lip}}, we set $\eta = 2$. For AGM-BiO, we set $\gamma = 1/(\frac{2L_g}{L_f}T^{\frac{2}{3}}+2)$ as in \citep[Theorem 4.4]{cao2024accelerated}.  For PB-APG, we set $\gamma = 10^{4}$. For Bi-SG, we set $\alpha = 0.75$ and $c = \frac 1 {L_f}$. For a-IRG, we set $\eta_0=10^{3}$ and $\gamma _ 0= 0.1$.  
For CG-BiO, we set $\gamma_k = \frac2 {k+2}$. 
For FC-BiO\textsuperscript{\texttt{sm}}, solving $\hat g ^*$ takes $1000$ iterations; for FC-BiO\textsuperscript{\texttt{Lip}} and CG-BiO, solving $\hat g^* $ takes $1500$ iterations. As in the first experiment, the results of such pretreatments are also plotted in Figure \ref{fig: OPR}.

\end{document}